\documentclass[12pt]{amsart}
\usepackage[utf8]{inputenc}

\title[Gorenstein Hierarchical Models]{The Codegree, Weak Maximum Likelihood Threshold, and the Gorenstein Property of Hierarchical Models}
\author{Joseph Johnson and Seth Sullivant}
\date{\today}

\usepackage{amssymb}
\usepackage{amsfonts}
\usepackage{amsthm}
\usepackage{amsmath}
\usepackage{amscd}
\usepackage{t1enc}
\usepackage[mathscr]{eucal}
\usepackage{indentfirst}
\usepackage{graphicx}
\usepackage{graphics}
\usepackage{pict2e}
\usepackage{epic}
\numberwithin{equation}{section}
\usepackage[margin=2.9cm]{geometry}
\usepackage{epstopdf} 
\usepackage{tikz}
\usepackage{transparent}
\usepackage{wasysym}
\usepackage{enumerate}

\usepackage{mathdots}
\usepackage{todonotes}
\usepackage{outlines}

\usepackage{caption}

 \colorlet{myGreen}{green!50!gray!120!}

\theoremstyle{plain}
\newtheorem{Th}{Theorem}[section]
\newtheorem{Lemma}[Th]{Lemma}
\newtheorem{Cor}[Th]{Corollary}
\newtheorem{Prop}[Th]{Proposition}

 \theoremstyle{definition}
\newtheorem{Def}[Th]{Definition}
\newtheorem{Conj}[Th]{Conjecture}

\newtheorem{Rem}[Th]{Remark}
\newtheorem{?}[Th]{Problem}
\newtheorem{Ques}[Th]{Question}
\newtheorem{Ex}[Th]{Example}

\newcommand{\calm}{\mathcal{M}}
\newcommand{\calp}{\mathcal{P}}
\newcommand{\calq}{\mathcal{Q}}

\newcommand{\zz}{\mathbb{Z}}
\newcommand{\rr}{\mathbb{R}}

\DeclareMathOperator{\cone}{cone}
\DeclareMathOperator{\conv}{conv}
\DeclareMathOperator{\wmlt}{wmlt}
\DeclareMathOperator{\Marg}{Marg}

\tikzstyle{wB}=[circle, draw, fill=black, inner sep=0pt, minimum width=4.5pt]


\begin{document}

\maketitle

\begin{abstract}
The codegree of a lattice polytope is the smallest integer dilate that contains a lattice 
point in the relative interior.   
The weak maximum likelihood threshold of a statistical model
is the smallest number of data points for which there is a non-zero probability
that the maximum likelihood estimate exists.  
The codegree of a marginal polytope is a lower bound on the maximum likelihood threshold
of the associated log-linear model, and they are equal when the marginal polytope
is normal.
We prove a lower bound on the codegree in the case of hierarchical log-linear models
and provide a conjectural formula for the codegree in general.
As an application, we study when the marginal polytopes of hierarchical models
are Gorenstein, including a classification of Gorenstein decomposable models,
and a conjectural classification of Gorenstein binary hierarchical models.



\end{abstract}



    



\section{Introduction}

Let $P$ be an integral polytope in $\mathbb{R}^d$.  The $k$th dilate
of $P$ is $kP =  \{ k x  : x \in P\}$.  
The \emph{codegree} of $P$ is the smallest integer $k$ such
that $kP$ contains a lattice point in its relative interior.
The codegree is a fundamental invariant of a lattice polytope, related to the
structure of the Ehrhart series of that polytope, which encodes lattice point
enumeration properties of the polytope.

Given a statistical model $\mathcal{M}$, and some data $D = D_1, \ldots, D_n $, 
a common problem is to compute
the maximum likelihood estimate, which is the point in the model that maximizes the likelihood
that the given data set $D$ was observed.  If the number of data points $n$ is
too small, it is possible that the maximum-likelihood estimate might not
exist.  The \emph{weak maximum likelihood threshold} of $\mathcal{M} $ is the smallest
$n$ for which the probability that the maximum likelihood estimate exists is greater than $0$.
The weak maximum likelihood threshold was first studied in the context of
Gaussian graphical models where it was related to combinatorial concepts like the
chromatic number of a graph and rigidity theory \cite{GrossSullivant2018, Bernstein2021}.

In this paper, we study the codegrees of polytopes associated to log-linear models.
A key observation is that the codegree of the marginal polytope of a log-linear
model is a lower bound on the weak maximum likelihood threshold of the model,
and, in fact, the codegree equals the weak maximum likelihood threshold when
the underlying marginal polytope is normal.

We apply and study the codegree of log-linear models in the specific case of 
hierarchical log-linear models.  We prove a general lower bound for the codegree that
we conjecture to equal the codegree.  We prove this conjecture in many cases.
As an application 
we study when hierarchical models are Gorenstein.  We give a complete classification
in the case of decomposable hierarchical models, using results about how the Gorenstein
property is preserved under the fiber product of polytopes.  We also provide a conjecture
about the structure of Gorenstein hierarchical models in the case of binary random variables.

This paper is organized as follows.  In the next section we give background results
on log-linear models, their marginal polytopes, and key background on lattice point polytopes and
the codegree.  We also prove the connection between the codegree and the
weak maximum likelihood threshold.  In Section \ref{sec:codeghier}, we prove our main results on 
the codegree of hierarchical models.  In Section \ref{sec:Gorenstein}, 
we study the fiber product construction
and how it preserves the Gorenstein property and use that to classify Gorenstein decomposable
models.   We also provide a conjecture about the structure of Gorenstein binary hierarchical models,
and some computational evidence.




\section{Marginal polytopes, codegree, and the weak maximum likelihood threshold}\label{sec:background}

In this section we will discuss the background on marginal polytopes of log-linear
models,
the concept of the codegree of a lattice polytope, and the connection to the
weak maximum likelihood threshold for log-linear models.

Let $[n] = \{1,2,\dots,n\}$. The \emph{probability simplex} $\Delta_{n-1}$ on $[n]$ states  
is the set 
\[
\Delta_{n-1} =  \left\{ p \in \mathbb{R}^n :  \sum_{i =1}^n p_i = 1, \mbox{ and } p_i \geq 0 \mbox{ for all } i \in [n] \right\}.
\]

\begin{Def}
Let  $A \in \mathbb{Z}^{k \times n}$ be an integer
matrix and we assume that $A$ is normalized so that there exists a $w \in \mathbb{Q}^k$ so that
$w^T A  =  {\bf 1}$.
The \emph{log-linear model} associated to $A$ is
\[\mathcal{M}_A = \{p \in \Delta_{n-1}: \log(p) \in \text{rowspan}(A)\}.\]
\end{Def}

The matrix $A$ is called the \emph{design matrix} of the log-linear model. 
The model $\mathcal{M}_A$ is a family of probability distributions for
random variables with $n$ states.  So an element $p \in \mathcal{M}_A$
satisfies $p_i = P(X  = i)$.  

Maximum likelihood estimation concerns the problem of taking data
and computing the probability distribution in the model that maximizes the
probability that the given data occurred. If we assume that the 
data is independent and identically distributed, then for a log-linear model
$\calm_\mathcal{A}$, data is summarized by a vector $u \in \mathbb{N}^n$, called a
vector of counts.  The \emph{sample size} of this data is $\sum_{i = 1}^n u_i$.
Maximum likelihood estimation is the following optimization problem:
\[
\mathrm{argmax}  \prod_{i = 1}^n p_i^{u_i}  \mbox{ subject to }  p \in \mathcal{M}_A.
\]

Birch's theorem changes this optimization problem into the problem of solving a
system of polynomial equations.

\begin{Th}[Birch's Theorem]
Suppose that $\mathbf{1} \in \text{rowspan}(A)$ and let $u \in \mathbb{N}^n$. Then the maximum likelihood estimate is the unique solution to the system
\[Au = (u_1 +\dots+ u_n)Ap \quad and \quad p \in \mathcal{M}_A\]
if it exists.
\end{Th}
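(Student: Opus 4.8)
The plan is to convert this constrained optimization into an unconstrained \emph{concave} maximization by exploiting the exponential-family structure of $\mathcal{M}_A$, read off the stationarity condition as Birch's equation, and then establish uniqueness by a short convexity argument that does not depend on existence. Concretely, I would proceed in four steps: reparametrize the model, differentiate the log-likelihood to obtain $Au = (u_1+\cdots+u_n)Ap$, invoke concavity to see that any stationary point is a global maximizer, and finally show directly that at most one point of $\mathcal{M}_A$ can satisfy $Ap = Au/N$, where $N = u_1+\cdots+u_n$.

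For the reparametrization, write $p_i(\theta) = \exp((A^T\theta)_i)/Z(\theta)$ with $Z(\theta) = \sum_{l} \exp((A^T\theta)_l)$ and $\theta \in \mathbb{R}^k$. The hypothesis $\mathbf{1} \in \text{rowspan}(A)$, i.e.\ $\mathbf{1} = A^T w$, is exactly what makes $\theta \mapsto p(\theta)$ surject onto $\mathcal{M}_A$: since $\log p(\theta) = A^T\theta - (\log Z(\theta))\mathbf{1} = A^T(\theta - (\log Z(\theta))w)$ lies in $\text{rowspan}(A)$, every $p(\theta)$ is in the model, and conversely any $p \in \mathcal{M}_A$ already satisfies $\sum_i p_i = 1$ and so equals $p(\theta)$ for the $\theta$ with $\log p = A^T\theta$. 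Because every element of $\mathcal{M}_A$ has strictly positive coordinates, ``the MLE exists'' means the supremum of the log-likelihood is attained within this parametrized family. The log-likelihood becomes $\ell(\theta) = \langle Au,\theta\rangle - N\log Z(\theta)$, and differentiating gives
\[
\nabla_\theta \ell = Au - N\,Ap(\theta),
\]
since $\partial_{\theta_j}\log Z(\theta) = \sum_i p_i(\theta)a_{ji} = (Ap(\theta))_j$. Setting the gradient to zero is precisely Birch's equation $Au = NAp$. As $\log Z$ is convex (it is a log-sum-exp), $\ell$ is concave, so a stationary point is automatically a global maximizer; this is what ties the optimization problem to the polynomial system.

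It remains to prove uniqueness, which I expect to be the one genuinely nontrivial step. Suppose $p, p^* \in \mathcal{M}_A$ both satisfy $Ap = Ap^* = Au/N$. Then $\log p - \log p^* \in \text{rowspan}(A)$, say $\log p - \log p^* = A^T\eta$, and hence
\[
\sum_i (p_i - p_i^*)(\log p_i - \log p_i^*) = (A(p - p^*))^T \eta = 0 .
\]
On the other hand $(a-b)(\log a - \log b) \ge 0$ for all $a,b>0$, with equality only when $a=b$, so the vanishing of the sum forces $p = p^*$. Combined with the stationarity computation, this shows that the MLE, when it exists, is the unique point of $\mathcal{M}_A$ solving $Au = NAp$. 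The main obstacle is therefore not the first-order calculus but arranging the uniqueness argument at the level of $p$ rather than of $\theta$ (the map $\theta\mapsto p$ being non-injective along the normalization direction $w$), which the displayed convexity inequality settles cleanly.
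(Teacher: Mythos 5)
Your argument is correct and complete. The paper itself states Birch's Theorem without proof, treating it as classical background (the related existence statement is cited to \cite[Thm 8.2.1]{Sullivant2018}), so there is no in-paper proof to compare against; what you have written is the standard exponential-family derivation and it holds up. The two points worth highlighting as genuinely load-bearing are exactly the ones you identified: the hypothesis $\mathbf{1}=A^Tw\in\mathrm{rowspan}(A)$ is what makes $\theta\mapsto p(\theta)$ surject onto $\mathcal{M}_A$ (so that attainment of the supremum over the model is equivalent to attainment over $\theta\in\mathbb{R}^k$, where stationarity of the concave $\ell$ gives $Au=NAp$), and the uniqueness step must be carried out at the level of $p$ rather than $\theta$, since $\theta\mapsto p(\theta)$ is not injective. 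Your inequality $\sum_i(p_i-p_i^*)(\log p_i-\log p_i^*)=(A(p-p^*))^T\eta=0$, combined with strict monotonicity of $\log$, settles that cleanly and is in fact slightly stronger than what the theorem requires: it shows $A$ restricted to $\mathcal{M}_A$ is injective, independently of any likelihood considerations.
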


A key observation here is the importance of the quantity $Au$ in the solution of
the maximum likelihood estimation problem.  The vector $Au$ is called the vector
of sufficient statistics, or, marginals in the context of hierarchical models (to be explained later).
This leads to the notion of the marginal cone and the marginal polytope
of the mode.

\begin{Def}
Let $A \in \mathbb{Z}^{k \times n}$ be an integer
matrix with  $\mathbf{1} \in \text{rowspan}(A)$.
The \emph{marginal cone} 
is the set $\mathrm{cone}(A)  =  \{  Au:  u \in \mathbb{R}^n_{\geq 0} \}$
which is the cone over all marginals of all possible data vectors $u$.
The \emph{marginal polytope} is the polytope $\mathrm{Marg}(A)$
consisting of the convex hull of the columns of $A$.
\end{Def}

Note that the log-linear model $\mathcal{M}_A$ is an open set, because it does
not include any points on the boundary of $\Delta_{n-1}$.  So in the statement
of Birch's theorem, we have the key caveat that the maximum likelihood estimate
might not exist.  In fact, the existence of the maximum likelihood estimate 
is characterized in terms of the marginal cone (see \cite[Thm 8.2.1]{Sullivant2018}).

\begin{Th}\label{thm:haberman}
Let $A \in \mathbb{Z}^{k \times n}$ be an integer
matrix with  $\mathbf{1} \in \text{rowspan}(A)$. 
Let $u \in \mathbb{N}^u$ be a vector of counts.  Then the maximum likelihood estimate
exists in the log-linear model $\mathcal{M}_A$ if and only if $u$ is in the relative
interior of $\mathrm{cone}(A)$.
\end{Th}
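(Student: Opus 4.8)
The plan is to recast maximum likelihood estimation as the maximization of a concave function of the natural parameters and to match the attainment of that maximum with the geometric condition on the sufficient statistic. Since $\mathbf{1}\in\mathrm{rowspan}(A)$, every $p\in\mathcal{M}_A$ can be written in exponential-family form $p_i(\theta)=\exp(\langle a_i,\theta\rangle-\psi(\theta))$, where $a_i$ is the $i$th column of $A$ and $\psi(\theta)=\log\sum_i\exp\langle a_i,\theta\rangle$ is the (convex) log-partition function. Writing $N=u_1+\cdots+u_n$, the log-likelihood becomes $\ell(\theta)=\langle Au,\theta\rangle-N\psi(\theta)$, which is concave because $\psi$ is convex. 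Its gradient is $\nabla\ell(\theta)=Au-N\,Ap(\theta)$, so the critical points of $\ell$ are exactly the solutions of Birch's equations. Because a differentiable concave function on $\mathbb{R}^k$ attains its supremum precisely when it has a critical point, the maximum likelihood estimate exists if and only if $\ell$ attains its supremum; I would first record that $\ell$ is always bounded above, since the recession function of $\psi$ is $d\mapsto\max_i\langle a_i,d\rangle$ and $\langle Au,d\rangle=\sum_i u_i\langle a_i,d\rangle\le N\max_i\langle a_i,d\rangle$ as $u\ge 0$.

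Next I would translate attainment into a membership statement. By the previous paragraph together with Birch's Theorem, the estimate exists if and only if $\tfrac1N Au$ lies in the image of the mean map $\theta\mapsto Ap(\theta)$, and I claim this image equals $\mathrm{relint}(\Marg(A))$. One inclusion is immediate: each $p(\theta)$ has strictly positive entries, so $Ap(\theta)=\sum_i p_i(\theta)\,a_i$ is a convex combination of all the columns with strictly positive weights, hence lies in the relative interior of their convex hull $\Marg(A)$.

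The reverse inclusion, together with non-attainment on the boundary, is the heart of the argument and the step I expect to be the main obstacle; both directions follow from analyzing the recession directions of $\ell$. Fix $v=\tfrac1N Au$ and set $g(\theta)=\langle v,\theta\rangle-\psi(\theta)$. If $v\in\mathrm{relint}(\Marg(A))$, write $v=\sum_i\mu_i a_i$ with all $\mu_i>0$; any recession direction $d$ of $g$ satisfies $\langle v,d\rangle\ge\max_i\langle a_i,d\rangle$, while $\langle v,d\rangle=\sum_i\mu_i\langle a_i,d\rangle\le\max_i\langle a_i,d\rangle$, so the positivity of the $\mu_i$ forces $\langle a_i,d\rangle$ to be constant in $i$. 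Such a $d$ makes $\psi$ affine and $g$ constant, so the recession cone of $g$ equals its lineality space and the supremum is attained at some $\theta^\ast$ with $Ap(\theta^\ast)=v$. Conversely, if $v$ lies on a proper face $F=\{x\in\Marg(A):\langle c,x\rangle=b\}$, where $\langle c,a_i\rangle\le b$ for all $i$ with strict inequality for some $i$, then $\tfrac{d}{dt}\ell(\theta+tc)=N\bigl(b-\langle c,Ap(\theta+tc)\rangle\bigr)>0$ for every $\theta$, since $Ap(\theta+tc)$ is always a strictly positive convex combination of the columns; thus $\ell$ strictly increases along $c$ from every point and its supremum is never attained. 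Intuitively, the likelihood approaches its supremum only as probability mass escapes onto the columns lying in $F$, i.e. onto a distribution on $\partial\Delta_{n-1}$ that is not in the open model $\mathcal{M}_A$.

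Finally I would convert the polytope condition into the stated cone condition. The normalization $w^TA=\mathbf{1}$ means every column of $A$, and hence every point of $\cone(A)$, has a well-defined height $w^Tx$, with $\Marg(A)=\{x\in\cone(A):w^Tx=1\}$; consequently $\mathrm{relint}(\cone(A))=\{s\,x:s>0,\ x\in\mathrm{relint}(\Marg(A))\}$. Since $w^T(Au)=\mathbf{1}^Tu=N>0$, the sufficient statistic satisfies $Au\in\mathrm{relint}(\cone(A))$ if and only if $\tfrac1N Au\in\mathrm{relint}(\Marg(A))$. Combining this with the previous two paragraphs yields the theorem, with the relative-interior condition read on the marginal $Au$.
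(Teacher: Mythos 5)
Your argument is correct, and it is essentially the standard exponential-family proof of this result: the paper itself gives no proof of Theorem \ref{thm:haberman}, citing \cite[Thm 8.2.1]{Sullivant2018}, whose proof proceeds exactly as you do, via concavity of $\ell(\theta)=\langle Au,\theta\rangle-N\psi(\theta)$, the identification of the image of the mean map $\theta\mapsto Ap(\theta)$ with $\mathrm{relint}(\Marg(A))$ through a recession-direction analysis, and the observation that $w^TA=\mathbf{1}^T$ lets one pass between $\tfrac1N Au\in\mathrm{relint}(\Marg(A))$ and $Au\in\mathrm{relint}(\cone(A))$. You also correctly read the statement's ``$u$ is in the relative interior of $\cone(A)$'' as referring to the sufficient statistic $Au$.
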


From here, we can explain the concept of the weak maximum likelihood threshold.

\begin{Def}
Let $A \in \mathbb{Z}^{k \times n}$ be an integer
matrix with  $\mathbf{1} \in \text{rowspan}(A)$.
The \emph{weak maximum likelihood threshold} of $\mathcal{M}_A$ (denoted  $\wmlt(\mathcal{M}_A)$) is the
smallest integer $m$ such that there exists a data vector $u \in \mathbb{N}^n$ with
sample size $m = \sum_{i = 1}^n u_i$ and such that the maximum likelihood estimate exists.

Equivalently, by Theorem \ref{thm:haberman}, $\wmlt(\calm_A)$ is the smallest integer $m$
such that there is a $u \in \mathbb{N}^n$ such that $\sum u_i = m$ and $Au$ is in the 
relative interior of $\mathrm{cone}(A)$.  
\end{Def}

The weak maximum likelihood threshold is the smallest number of data points for
which there is a chance that the maximum likelihood estimate might exist. For any
amount of data smaller than $\wmlt(\calm_A)$ the maximum likelihood estimate will
not exist.  

\begin{Rem} 
The weak maximum likelihood threshold was originally introduced in
the context of Gaussian graphical models, where the  maximum likelihood
threshold is also studied \cite{GrossSullivant2018}. 
The maximum likelihood threshold is the smallest
sample size for which the maximum likelihood estimate exists with probability one.
Unfortunately, the maximum likelihood threshold does not exist for models
on discrete random variables, because there is always a nonzero probability for
discrete random variables that the maximum likelihood will not exist.  
For instance, if $a_i$ is an extreme ray of $\cone(A)$, then
the data vector $m a_i$ will always have a nonzero probability of occurring 
but the maximum likelihood estimate never exists for this vector of counts.
\end{Rem}

Since the weak maximum likelihood threshold has a nice interpretation in terms
of lattice points in the marginal cone, we can relate this concept to concepts in 
polyhedral geometry.  Let $\calp \subseteq \mathbb{R}^n$ be an integral polytope, that is, a polytope whose
vertices are all in $\mathbb{Z}^n$.  The $m$th dilate of $\calp$ is the set
$m \calp =  \{  mp  :  p \in \calp \}$.

\begin{Def}
 Let $\calp \subseteq \mathbb{R}^n$ be an integral polytope.  The \emph{codegree} of $\calp$,
 denoted $\mathrm{codeg}(\calp)$, is the smallest positive integer $m$ such that $m\calp$ has
 an integer point in its relative interior.
\end{Def}

The codegree is related to the weak maximum likelihood threshold because the 
$m$th dilates of $\mathrm{Marg}(A)$ contain all the sufficient statistics of
the data points $u$ of sample size $m$.

\begin{Th}\label{thm:codegwmlt}
 Let $A \in \mathbb{Z}^{k \times n}$ be an integer
matrix with  $\mathbf{1} \in \text{rowspan}(A)$.  
Then 
\[\mathrm{codeg}(\mathrm{Marg}(A)) \leq  \wmlt( \calm_A ).\]
\end{Th}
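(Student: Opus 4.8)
The plan is to unpack both sides of the inequality in terms of lattice points and relative interiors, and show that any data vector certifying the weak maximum likelihood threshold also certifies a relative-interior lattice point in a corresponding dilate of $\mathrm{Marg}(A)$. The key linking observation is that $\mathrm{cone}(A)$ is precisely the cone over the marginal polytope $\mathrm{Marg}(A)$: since $\mathbf{1} \in \text{rowspan}(A)$, there is a linear functional $w^T$ with $w^T A = \mathbf{1}$, so the hyperplane $\{y : w^T y = m\}$ slices $\mathrm{cone}(A)$ in exactly the dilate $m \cdot \mathrm{Marg}(A)$. Indeed, each column $a_i$ of $A$ satisfies $w^T a_i = 1$, so the columns all lie on the hyperplane $w^T y = 1$, and $\mathrm{Marg}(A) = \mathrm{conv}(a_1, \dots, a_n)$ is the slice of $\mathrm{cone}(A)$ at height $1$.

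First I would set $m = \wmlt(\calm_A)$. By definition there is a vector of counts $u \in \mathbb{N}^n$ with $\sum_i u_i = m$ such that the maximum likelihood estimate exists, which by Theorem \ref{thm:haberman} means $Au$ lies in the relative interior of $\mathrm{cone}(A)$. Next I would compute the height of $Au$: since $w^T A = \mathbf{1}$, we have
\[
w^T (Au) = (w^T A) u = \mathbf{1}^T u = \sum_{i=1}^n u_i = m.
\]
So $Au$ is a point at height $m$ in $\mathrm{cone}(A)$, which identifies it as a point of $m \cdot \mathrm{Marg}(A)$. The remaining step is to argue that the relative interior of the cone, intersected with the height-$m$ slice, equals the relative interior of that slice as a polytope; this is a general fact about slicing a cone by a hyperplane that meets every ray strictly positively. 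Hence $Au$ is a lattice point (it is an integer combination of integer columns) in the relative interior of $m \cdot \mathrm{Marg}(A)$, which shows $\mathrm{codeg}(\mathrm{Marg}(A)) \leq m = \wmlt(\calm_A)$.

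The main obstacle I anticipate is the careful handling of the relative interiors under the slicing map, together with attention to whether the cone is pointed or lower-dimensional. One must verify that a point in the relative interior of $\mathrm{cone}(A)$ that lies at height $m$ projects to the relative interior of the polytope slice $m \cdot \mathrm{Marg}(A)$, rather than to a point on the boundary of that slice; this uses that the linear map $y \mapsto y$ restricted to the affine hyperplane at height $m$ is an isomorphism onto the affine hull of the slice, so relative interiors correspond. A minor subtlety is that the codegree is defined via dilates $m \mathcal{P}$ with $m$ a positive integer, whereas $\wmlt$ is an integer by construction, so the heights match up as integers and no rescaling is needed. Once the slicing correspondence is established, the inequality is immediate, and the argument also makes transparent why equality holds when $\mathrm{Marg}(A)$ is normal: normality guarantees that every relative-interior lattice point of the dilate arises as $Au$ for some nonnegative integer $u$, so the smallest dilate with an interior lattice point yields a genuine data vector of that sample size.
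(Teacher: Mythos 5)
Your proof is correct and follows essentially the same route as the paper's: take a data vector $u$ of sample size $m = \wmlt(\calm_A)$ witnessing existence of the MLE, observe via Theorem \ref{thm:haberman} that $Au$ lies in the relative interior of $\cone(A)$, and identify $Au$ as a lattice point in the relative interior of $m\,\Marg(A)$ since that dilate is the height-$m$ slice of the cone. Your treatment of the slicing correspondence between relative interiors is actually more careful than the paper's, which asserts this step without elaboration.
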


\begin{proof}
Note that  $\mathrm{Marg}(A)$ is the convex hull of all sufficient statistics vectors
$Au$ where $u$ is a data point with sample size one.  The $m$th dilate of $\mathrm{Marg}(A)$
is the convex hull of all $Au$ where $u$ is a data point of sample size $m$.  So, if there
is a data point $u$ of sample size $m$ such that $Au$ is in the relative interior of 
$\mathrm{cone}(A)$ then $Au$ is a lattice point in the relative interior of $m\mathrm{Marg}(A)$.
This shows that for there to exist a data point which is a witness to the weak maximum likelihood threshold being greater than or equal to $m$, there much be a lattice point in the relative interior
of $m \mathrm{Marg}(A)$ at the least, which implies this bound.
\end{proof}

The reason why Theorem \ref{thm:codegwmlt} is just a bound, and not sharp in general, is that 
there can be situations where $m \mathrm{Marg}(A)$ contains lattice points in its
relative interior but none of those points
are of the form $Au$ for any nonnegative integer vector $u$.
The situation where that does not happened is captured by the notion of normality.

\begin{Def}
 A matrix    $A \in \mathbb{Z}^{k \times n}$ is called \emph{normal} if 
 \[
{\rm cone}(A)  \cap \{  Au :  u \in \mathbb{Z}^n \}  =  \{  Au :  u \in \mathbb{N}^n \}. 
 \]
\end{Def}

A closely related notion is the notion of normality of a polytope.

\begin{Def}
A lattice polytope $\calp \subseteq \mathbb{R}^n$ is called \emph{normal} if every lattice point $v \in m \calp$ can be written as $v_1 + \dots + v_m$ for lattice points $v_1,\dots,v_m \in \calp$.
\end{Def}

Note that the two notions of normality coincide in the event that the lattice generated by the lattice points in $\calp = \conv(A)$ is equal to $\zz^n \cap \cone(A)$.  

\begin{Cor} \label{cor:normal}
     Let $A \in \mathbb{Z}^{k \times n}$ be an integer
matrix with  $\mathbf{1} \in \text{rowspan}(A)$ and suppose  that $A$ is normal.
Then 
\[\mathrm{codeg}(\mathrm{Marg}(A)) =  \wmlt( \calm_A ).\]
\end{Cor}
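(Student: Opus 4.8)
The plan is to establish the reverse inequality $\wmlt(\calm_A) \le \mathrm{codeg}(\Marg(A))$, since Theorem \ref{thm:codegwmlt} already supplies $\mathrm{codeg}(\Marg(A)) \le \wmlt(\calm_A)$. Set $m = \mathrm{codeg}(\Marg(A))$, so that by definition there is a lattice point $v$ in the relative interior of $m\,\Marg(A)$. My goal is to manufacture from $v$ a genuine data vector $u \in \mathbb{N}^n$ of sample size $m$ whose marginal $Au$ lies in the relative interior of $\cone(A)$; Theorem \ref{thm:haberman} then guarantees that the maximum likelihood estimate exists for $u$, which forces $\wmlt(\calm_A) \le m$ and completes the argument.

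First I would record the geometric dictionary between the dilates of $\Marg(A)$ and the level sets of $\cone(A)$. Because $\mathbf{1} \in \mathrm{rowspan}(A)$, there is a $w$ with $w^T A = \mathbf{1}$, so the linear functional $x \mapsto w^T x$ takes the constant value $m$ on every point of $m\,\Marg(A)$ and exhibits $\Marg(A)$ as the cross-section $\cone(A) \cap \{ w^T x = 1 \}$. Since a hyperplane meeting the interior of a cone cuts out, on each positive level set, exactly the relative interior of the corresponding cross-section, I obtain $\mathrm{relint}(m\,\Marg(A)) = \mathrm{relint}(\cone(A)) \cap \{ w^T x = m \}$. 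In particular the witness $v$ lies in $\mathrm{relint}(\cone(A))$ and satisfies $w^T v = m$.

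It remains to convert $v$ into a data vector, and this is the step where normality enters and where I expect the only genuine obstacle, namely the matching of lattices. The point $v$ is a lattice point of the ambient integer lattice lying in the affine span of $\Marg(A)$; under the hypothesis that makes matrix- and polytope-normality agree, i.e.\ that the lattice generated by the columns of $A$ coincides with the ambient lattice on this span, one concludes $v = Au$ for some $u \in \mathbb{Z}^n$. Because $v \in \cone(A)$, normality of $A$ then upgrades this to $u \in \mathbb{N}^n$. Applying $w^T$ gives $\sum_i u_i = w^T(Au) = w^T v = m$, so $u$ is a data vector of sample size exactly $m$ whose sufficient statistic $Au = v$ lies in $\mathrm{relint}(\cone(A))$. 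This yields $\wmlt(\calm_A) \le m$, and combined with Theorem \ref{thm:codegwmlt} the two quantities coincide. The hard part is precisely the lattice bookkeeping above: one must ensure that the interior lattice point witnessing the codegree actually lies in $\{ Au : u \in \mathbb{Z}^n \}$ and not merely in the ambient integer lattice, which is exactly the content of the normality hypothesis together with the remark preceding the corollary; if that identification of lattices were to fail, the extracted $u$ need not be integral and the equality could break.
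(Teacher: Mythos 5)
Your proposal is correct and follows essentially the same route as the paper: Theorem \ref{thm:codegwmlt} gives one inequality, and normality converts the interior lattice point of the $m$th dilate into a data vector of sample size $m$, yielding the other. You are in fact more explicit than the paper's two-line proof about the lattice-matching subtlety (that the witness must lie in $\{Au : u \in \mathbb{Z}^n\}$, not merely the ambient lattice), which the paper only addresses in the remark preceding the corollary.
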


\begin{proof}
    If $A$ is normal and $m \Marg(A)$ contains an interior lattice point, then
    that lattice point is of the form $Au$ for some nonnegative integral vector $u$.
    Hence, if $m$ is the codegree of $\Marg(A)$ and $A$ is normal then $m \geq \wmlt(\calm_A)$.
\end{proof}

If $A$ is not normal, the inequality maybe be strict or tight, depending on the situation.
In the next section, the hierarchical model of the $K_4$ graph provides an
example of a matrix $A_{\Gamma, \mathbf{2}}$ where the codegree is strictly smaller
than the weak maximum likelihood threshold (Example \ref{ex:k4}).


\section{The Codegree of Hierarchical Models} \label{sec:codeghier}

In this section we introduce hierarchical models and their
marginal polytopes.  We prove some results about the codegree of hierarchical 
models including a general lower bound, that we show is tight in a number of 
cases.  We also discuss the known results on normality of hierarchical models
and use those results to get bounds on the weak maximum likelihood threshold
for hierarchical models.  

Hierarchical models are widely used statistical models that specify interactions
between collections of random variables.  The particular sets of random variables
that will have interaction terms in the model are determined by the faces
of an associated simplicial complex.  Data for these models is encoded in a 
multiway table, and the sufficient statistics of a hierarchical model are
marginals of the multiway table.  Since hierarchical models are
among the most important log-linear models, this is the reason  that the expressions
``marginal cone'' and ``marginal polytope'' are used in general for log-linear models.

\begin{Def}
Let $S$ be a finite set and let $2^S$ denote the set of all subsets of $S$. 
A \emph{simplicial complex} on $S$ is a set $\Gamma \subseteq 2^S$ such that for each $\sigma \in \Gamma$, if $\tau \subseteq \sigma$, then $\tau \in \Gamma$. 
\end{Def}

We typically denote the ground set by $|\Gamma|$, which is the set $S$.  This
notation will be useful when we have multiple simplicial complexes on overlapping
ground sets.
The elements of $\Gamma$ are called \emph{faces} of $\Gamma$ and faces that are maximal with respect to inclusion are called \emph{facets}. Let $\Gamma$ be a simplicial complex and let $d \in \mathbb{Z}_{\geq 2}^{|\Gamma|}$ be a vertex labeling. We associate to $\Gamma$ and $d$ a design matrix $A_{\Gamma,d}$ as follows.
\begin{itemize}
    \item Let $\mathcal{D} = \prod\limits_{t \in |\Gamma|} [d_t]$. The elements of $\mathcal{D}$ index the columns of $A_{\Gamma,d}$.
    \item For $\sigma \in \Gamma$, let $\mathcal{D}_\sigma = \prod\limits_{t \in \sigma} [d_t]$. The rows of $A_{\Gamma,d}$ are indexed by pairs $(\sigma, i_\sigma)$ where $\sigma \in \text{facets}(\Gamma)$ and $i_\sigma \in \mathcal{D}_\sigma$. 
    \item Given $j \in \mathcal{D}$ and $\sigma \in \text{facets}(\Gamma)$, let $j_\sigma$ be the coordinates of $j$ indexed by elements of $\sigma$. 
    \item  For each $j \in \mathcal{D}$ let $a^j$ be the column vector 
    whose coordinates $a^j_{(\sigma,i_\sigma)} $ satisfy 
    \[a^j_{(\sigma, i_\sigma)} = \begin{cases} 1 &~\mbox{ }~i_\sigma = j_\sigma \\
    0 &~\mbox{ }~\text{otherwise} \end{cases} \]
    \item  The design matrix $A_{\Gamma, d}$ is the matrix whose columns are
    the vectors $a^j$, for $j \in \mathcal{D}.$
\end{itemize}
The \emph{hierarchical model} $\mathcal{M}_{\Gamma,d}$ is the 
log-linear model with design matrix $A_{\Gamma,d}$. 
Let $\mathrm{Marg}(\Gamma,d)$ denote the marginal polytope of $\mathcal{M}_{\Gamma,d}$. 

\begin{Ex}
    Let $\Gamma = [12][23]$, the simplicial complex whose facets are $\{1,2\}$ and
    $\{2,3\}$ and let $d = (2,2,2)$.
    Then the design matrix of $\text{Marg}(\Gamma,d)$ is
\[
     \bordermatrix{ & 111 & 112 & 121 & 122 & 211 & 212 & 221 & 222   \cr
        ([12], 11) & 1 & 1 & 0 & 0 & 0 & 0 & 0 & 0  \cr
       ([12], 12)  & 0 & 0 & 1 & 1 & 0 & 0 & 0 & 0 \cr
       ([12], 21) & 0 & 0 & 0 & 0 & 1 & 1 & 0 & 0  \cr
       ([12], 22) & 0 & 0 & 0 & 0 & 0 & 0 & 1 & 1  \cr
       ([23], 11)  & 1 & 0 & 0 & 0 & 1 & 0 & 0 & 0 \cr
        ([23], 12) & 0 & 1 & 0 & 0 & 0 & 1 & 0 & 0 \cr
        ([23], 21)  & 0 & 0 & 1 & 0 & 0 & 0 & 1 & 0 \cr
        ([23], 22)  & 0 & 0 & 0 & 1 & 0 & 0 & 0 & 1 \cr}
    \]
\end{Ex}

The polyhedral geometry of the marginal polytope of hierarchical models
is a complicated subject, and not much is known about the facet defining inequalities 
of the marginal polytope in general.  
A complete description is probably impossible,
as it would include as a special case the cut polytopes, which are known
to be intractable \cite{Deza2010}.  
Some basic facts are known, and they are useful in
our study of the codegree.  For example:

\begin{Prop}
    Let $\Gamma$ be a simplicial complex and $d \in \mathbb{Z}_{\geq 2}^{|\Gamma|}$.
    Then for each $\sigma \in \mathrm{facet}(\Gamma)$ and each $i_\sigma \in \mathcal{D}_\sigma$,
    $x_{(\sigma,i_\sigma)} \geq 0$ is a facet defining inequality of the marginal cone
    $\mathrm{cone}(A_{\Gamma, d}).$
\end{Prop}

This result seems to be well-known, but we have not been able to find a specific reference
    for it in the literature.  Here is a sketch of an argument.

\begin{proof}
    The inequality $x_{(\sigma,i_\sigma)} \geq 0$ is clearly a valid inequality for the marginal cone.
    To see that it is facet defining, we must show that the columns of $A_{\Gamma,d}$ that
    that satisfy $x_{(\sigma,i_\sigma)} = 0$, span a space of dimension one less than the rank of 
    $A_{\Gamma,d}$.  
    
    We can assume without loss of generality that $i_\sigma = (d_j : j \in \sigma)$.  For a fixed simplicial complex $\Gamma$ and $d \in \mathbb{Z}_{\geq 2}^{|\Gamma|}$
    consider the set of columns of $A_{\Gamma,d}$:    
\[
B_{\Gamma, d} = \cup_{\sigma \in \Gamma}  \{ a^j :  j_k \in \{2, \ldots, d_k\} \mbox{ if } k \in \sigma 
\mbox{ and }  j_k = 1 \mbox{ if } k \not\in \sigma \}.
\]
Note that the union runs over all faces of $\Gamma$ (not just the facets). 
One verifies that
\begin{itemize}
    \item $B_{\Gamma, d}$ is linearly independent
    \item  The number of elements in $B_{\Gamma, d}$ equals the rank of $A_{\Gamma, d}$
    \item  All the elements of $B_{\Gamma,d}$ except $a^{j^*}$ satisfy $x_{(\sigma,i_\sigma)} \geq 0$
    (where $a^{j^*}$ is the unique vector in  $B_{\Gamma,d}$ with $j_\sigma = i_\sigma$).  
\end{itemize}
These facts imply that $x_{(\sigma,i_\sigma)} \geq 0$ is a facet defining inequality of the marginal cone
    $\mathrm{cone}(A_{\Gamma, d}).$
\end{proof}

\begin{Def}
Let $\Gamma$ be a simplicial complex on $[n]$ with  $d \in \mathbb{Z}^n_{\geq 2}$.
The \emph{weight} of a face $\sigma \in \Gamma$ is $\prod\limits_{k \in \sigma} d_k$. 
We denote the maximum weight among all facets of $\Gamma$ by $\omega(\Gamma,d)$.
\end{Def}

For the marginal polytope of a hierarchical model, the codegree is bounded by the weights of the faces of the associated simplicial complex.

\begin{Prop}\label{Prop:codegreelowerbound}
Let $\Gamma$ be a simplicial complex on $[n]$ and let $d \in \mathbb{Z}_{\geq 2}^{|\Gamma|}$ be a weight vector. Then $\mathrm{codeg}(\mathrm{Marg}(\Gamma,d)) \geq \omega(\Gamma,d)$.
\end{Prop}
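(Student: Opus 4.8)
The plan is to exhibit a single linear functional that records the ``level'' of a point (its sample size) and is simultaneously forced to be large on any interior lattice point, with the largest-weight facet supplying the bound.

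First I would fix a facet $\sigma^* \in \mathrm{facet}(\Gamma)$ achieving the maximum weight, so that $|\mathcal{D}_{\sigma^*}| = \prod_{k \in \sigma^*} d_k = \omega(\Gamma,d)$. The key elementary observation is that for every $j \in \mathcal{D}$ the column $a^j$ has exactly one nonzero entry among the coordinates indexed by $\{(\sigma^*, i) : i \in \mathcal{D}_{\sigma^*}\}$, namely the entry at $i = j_{\sigma^*}$. Hence the linear functional $\ell(x) = \sum_{i \in \mathcal{D}_{\sigma^*}} x_{(\sigma^*, i)}$ satisfies $\ell(a^j) = 1$ for all $j$. Since $\mathrm{Marg}(\Gamma,d)$ is the convex hull of the $a^j$, this gives $\ell(x) = 1$ on the whole polytope and therefore $\ell(x) = m$ on all of $m\,\mathrm{Marg}(\Gamma,d)$. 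This is just the concrete form, for hierarchical models, of the normalization $\mathbf{1} \in \mathrm{rowspan}(A)$.

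Next I would take a lattice point $v$ in the relative interior of $m\,\mathrm{Marg}(\Gamma,d)$ and argue that each of the $\omega(\Gamma,d)$ coordinates $v_{(\sigma^*, i)}$ is strictly positive. This is exactly where the preceding Proposition enters: each inequality $x_{(\sigma^*, i)} \ge 0$ is facet defining for $\mathrm{cone}(A_{\Gamma,d})$, and a point in the relative interior must satisfy every facet-defining inequality strictly, so $v_{(\sigma^*, i)} > 0$ for all $i \in \mathcal{D}_{\sigma^*}$. Alternatively, and self-containedly, relative-interior points of $m\,\mathrm{Marg}(\Gamma,d)$ are strictly positive combinations $\sum_j c_j a^j$ with all $c_j > 0$, and $v_{(\sigma^*,i)} = \sum_{j : j_{\sigma^*} = i} c_j$ is then a sum over a nonempty index set, hence positive. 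Because $v$ is integral, strict positivity upgrades to $v_{(\sigma^*, i)} \ge 1$.

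Combining the two steps finishes the argument:
\[ m = \ell(v) = \sum_{i \in \mathcal{D}_{\sigma^*}} v_{(\sigma^*, i)} \ge |\mathcal{D}_{\sigma^*}| = \omega(\Gamma,d). \]
Since this inequality holds for every $m$ admitting an interior lattice point, in particular for the smallest such $m$, we conclude $\mathrm{codeg}(\mathrm{Marg}(\Gamma,d)) \ge \omega(\Gamma,d)$. I do not expect a serious obstacle here; the only point requiring care is the passage from ``relative interior of the dilated polytope'' to ``strict positivity in the facet coordinates,'' which is why having the facet-defining Proposition available (or, equivalently, the description of relative-interior points as strictly positive convex combinations) is essential. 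Everything else reduces to the bookkeeping identity $|\mathcal{D}_{\sigma^*}| = \omega(\Gamma,d)$.
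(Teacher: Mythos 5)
Your proposal is correct and follows essentially the same argument as the paper: fix a maximum-weight facet $\sigma^*$, observe that the coordinates $x_{(\sigma^*,i)}$ sum to the dilation factor on the dilated marginal polytope, and use the facet-defining inequalities $x_{(\sigma^*,i)} \ge 0$ together with integrality to force each such coordinate of an interior lattice point to be at least $1$. The only difference is presentational (you package the coordinate sum as an explicit linear functional and offer an alternative self-contained justification of strict positivity), not mathematical.
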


\begin{proof}
Let $k = \mathrm{codeg}(\mathrm{Marg}(\Gamma,d))$ and let 
$y = \sum\limits_{j \in \mathcal{D}} \lambda_j ka^j$ be a lattice point in the 
relative interior of $k\mathrm{Marg}(\Gamma,d)$. 
Note that $y_{(\sigma, i_\sigma)} \geq 1$ since $y$ is an integer point and since $x_{(\sigma, i_\sigma)} \geq 0$ 
defines a face of $\mathrm{Marg}(\Gamma,d)$ for every $i_\sigma$. 

Fix a facet $\sigma$ of $\Gamma$. 
Note that each vector $a^j$ has exactly one $1$ among the entries $a^j_{\sigma, i_\sigma}$.
Thus, for any $x \in k \mathrm{Marg}(\Gamma,d)$, $\sum_{i_\sigma \in \mathcal{D}_\sigma} x_{(\sigma,i_\sigma)} = k$.
Now if $\sigma$ is a facet of maximum weight, then 
\[
\sum\limits_{i_\sigma \in \mathcal{D}_\sigma} 1  =  \omega(\Gamma, d)
\]
So for $x$ an integer point in the interior of $k \mathrm{Marg}(\Gamma,d))$
with $k  = \mathrm{codeg}(\mathrm{Marg}(\Gamma,d))$ we have:
\[
\mathrm{codeg}(\mathrm{Marg}(\Gamma,d)) = k =  \sum_{i_\sigma \in \mathcal{D}_\sigma} y_{(\sigma,i_\sigma)}
\geq \sum_{i_\sigma \in \mathcal{D}_\sigma} 1  =  \omega(\Gamma, d).
\]
Thus $\mathrm{codeg}(\mathrm{Marg}(\Gamma,d)) \geq \omega(\Gamma,d)$.
\end{proof}

\begin{Ex}
Let $\Gamma$ be the simplicial complex $[12][23]$ and let $d = (3,2,2)$. 
If the coordinates of $\Marg(\Gamma,d)$ are in the order
\[
(([12], 11), ([12], 12), ([12], 21),([12], 22),([12], 31),([12], 11),  \]
\[
([23], 11),([23], 12),([23], 21),([23], 22))
\]
then the following points lie in the relative interior of $6 \left(\text{Marg}(\Gamma,d)\right)$:
\[\begin{array}{ccc}
     (1,1,1,1,1,1,1,2,1,2), &(1,1,1,1,1,1,1,2,2,1), \\
     (1,1,1,1,1,1,2,1,1,2), &(1,1,1,1,1,1,2,1,2,1).
\end{array}\]
Each vertex of $\Marg(\Gamma,d)$ has one of the first six coordinates equal to $1$. 
As a consequence, the sum the first $6$ coordinates must equal the dilation factor. Each coordinate must be positive in a point that lies in the relative interior, and so this is the smallest dilate with a lattice point in the relative interior.

We note that because the weights of  the facets $[12]$ and $[23]$ are different,
there were multiple ways for the last 4 coordinates to sum to 6 
and still yield a lattice point in the relative interior of this polytope. 
When the weights of all facets of $\Gamma$ are equal, the lattice point is unique (which we show later in Lemma~\ref{lemma:codegreeMarg}).
\end{Ex}

In general, we conjecture that the bound from Proposition \ref{Prop:codegreelowerbound}
is tight.

\begin{Conj}\label{conj:codegreehier}
    Let $\Gamma$ be a simplicial complex on $[n]$ and let $d \in \mathbb{Z}_{\geq 2}^{|\Gamma|}$ be a weight vector. Then $\mathrm{codeg}(\mathrm{Marg}(\Gamma,d)) = \omega(\Gamma,d)$.
\end{Conj}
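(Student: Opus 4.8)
Since Proposition \ref{Prop:codegreelowerbound} already gives $\mathrm{codeg}(\Marg(\Gamma,d)) \ge \omega(\Gamma,d)$, the plan is to establish the reverse inequality $\mathrm{codeg}(\Marg(\Gamma,d)) \le \omega(\Gamma,d)$ by exhibiting a single lattice point in the relative interior of $\omega\,\Marg(\Gamma,d)$, where I abbreviate $\omega = \omega(\Gamma,d)$. First I would reformulate this existence problem in terms of realizing tables. A point lies in the relative interior of a polytope exactly when it is a convex combination of the generating points with all coefficients strictly positive; scaling by $\omega$, a point $y$ lies in $\operatorname{relint}\!\left(\omega\,\Marg(\Gamma,d)\right)$ if and only if
\[
y = \sum_{j \in \mathcal{D}} c_j\, a^j \quad\text{for some } c \in \mathbb{R}_{>0}^{\mathcal{D}} \text{ with } \sum_{j} c_j = \omega.
\]
Here $y = A_{\Gamma,d}\,c$ is exactly the vector of $\sigma$-marginals of the table $c$. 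Thus the task reduces to the following: \emph{construct a strictly positive real table $c\colon \mathcal{D} \to \mathbb{R}_{>0}$ of total mass $\omega$ all of whose facet-marginals are integers.} Any such table certifies $\mathrm{codeg}(\Marg(\Gamma,d)) \le \omega$, and together with Proposition \ref{Prop:codegreelowerbound} gives equality.

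The base case is the situation in which every facet of $\Gamma$ has the same weight $\omega$. Then the uniform table $c_j = \omega/|\mathcal{D}|$ works immediately: its $\sigma$-marginal is the constant $\omega\big/\prod_{k\in\sigma} d_k = 1$, so $y = \mathbf{1}$ is an integer point. It is moreover the only possibility, since each facet's marginals are positive integers summing to $\omega$ over exactly $\omega$ cells; this recovers the uniqueness recorded in Lemma~\ref{lemma:codegreeMarg}. All the difficulty therefore lies in the heterogeneous-weight case, where $\prod_{k\in\sigma} d_k \nmid \omega$ for some facet $\sigma$ and the uniform table produces non-integer marginals.

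For decomposable $\Gamma$ I would induct along a junction tree. Using the conditional-independence factorization that underlies the fiber-product description of $\Marg(\Gamma,d)$ developed in Section~\ref{sec:Gorenstein}, a strictly positive table realizing prescribed facet-marginals exists as soon as those marginals are individually positive and agree on the separators, namely $c = \prod_\sigma m_\sigma \big/ \prod_S m_S$ over the cliques $\sigma$ and separators $S$ of the tree. The problem then collapses to a purely combinatorial one: choose positive integer marginal tables $m_\sigma$, each of total mass $\omega$, restricting to a common positive integer table $m_S$ on every separator $S$. I would fix balanced (as-uniform-as-possible) separator marginals first and extend them facet by facet along the tree, the only bookkeeping being that each separator entry is large enough to be split into positive integers across its fiber.

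The essential obstacle, and the reason the statement is posed as a conjecture rather than a theorem, is the general non-decomposable complex with unequal facet weights. There is no conditional-independence factorization available to manufacture a positive realizing table, and the facets of $\cone(A_{\Gamma,d})$ beyond the coordinate inequalities $x_{(\sigma,i_\sigma)} \ge 0$ are not understood (a complete facet description would subsume the intractable cut polytopes). Consequently one cannot certify membership in the relative interior merely from positivity of the marginals; the positive table must be produced explicitly. I expect the proof to proceed case by case — reducing via links and cones, collapsing states to homogenize weights, and gluing established pieces — proving the bound for structured families such as graphs and complexes whose facet weights share enough common divisibility, while the fully general heterogeneous case remains the crux.
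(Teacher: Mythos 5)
This statement is posed in the paper as a conjecture, and the paper itself only establishes partial cases; your proposal correctly reproduces exactly those cases and is honest about where the open difficulty lies, so there is no gap to speak of in what you actually claim. Your reformulation (a relative-interior lattice point of $\omega\,\Marg(\Gamma,d)$ is the same as a strictly positive table $c$ of total mass $\omega$ with integer facet-marginals) and your uniform table $c_j = \omega/|\mathcal{D}|$ are precisely the paper's argument: the equal-weight case with its uniqueness statement is Lemma~\ref{lemma:codegreeMarg}, and your observation that the uniform table still works whenever every facet weight divides $\omega$ is Lemma~\ref{lemma:codegdivide}. Where you go beyond the paper is the junction-tree argument for decomposable $\Gamma$ with non-dividing facet weights, via the factorization $c = \prod_\sigma m_\sigma / \prod_S m_S$; the paper uses the fiber-product structure in Section~\ref{sec:Gorenstein} only for the Gorenstein classification (where all facet weights are forced to be equal) and never addresses the codegree of decomposable complexes with heterogeneous weights. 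That extension is plausible but not yet a proof: the bookkeeping you wave at is real, since when you extend a balanced separator marginal $m_S$ to a facet $\sigma \supseteq S$ you need each entry $m_S(i_S)$ to be at least $\prod_{k \in \sigma \setminus S} d_k$, and a facet in a junction tree may meet several separators with incompatible balancing constraints, so you would need to verify that a single positive integer table $m_\sigma$ of mass $\omega$ can satisfy all of its separator restrictions simultaneously. If that verification goes through, you would have a genuinely new partial result toward the conjecture; your identification of the non-decomposable, non-dividing case as the crux (precisely because no factorization and no facet description of the marginal cone are available) matches the paper's assessment.
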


We prove Conjecture \ref{conj:codegreehier} in some special cases below.
The argument depends on 
producing explicit lattice points in the relative interior 
of $\omega(\Gamma, d) \Marg(\Gamma,d).$

\begin{Lemma}
\label{lemma:codegreeMarg}
Let $\Gamma$ be a simplicial complex on $[n]$ with weight vector $d \in \mathbb{Z}_{\geq 2}^n$.
If all facets have equal weight, then the codegree of $\emph{Marg}(\Gamma,d)$ is $\omega(\Gamma,d)$. 
Moreover, in this case $\mathbf{1}$ is the unique lattice point in the relative interior of $\omega(\Gamma, d) \Marg(\Gamma,d)$.
\end{Lemma}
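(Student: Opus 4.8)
The plan is to prove the reverse of the inequality in Proposition~\ref{Prop:codegreelowerbound} by producing an explicit interior lattice point, and then to upgrade this to uniqueness by combining the two facts already isolated in that proof: interior integer points have all coordinates at least $1$, and their coordinates sum to the dilation factor over the rows attached to any fixed facet. Write $\omega = \omega(\Gamma,d)$ throughout.

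First I would guess the candidate interior point to be the centroid of the vertices, $p^\ast = \frac{1}{|\mathcal{D}|}\sum_{j \in \mathcal{D}} a^j$, where $|\mathcal{D}| = \prod_{t \in |\Gamma|} d_t$ is the number of columns. Since every vertex $a^j$ occurs with strictly positive coefficient, $p^\ast$ lies in the relative interior of $\Marg(\Gamma,d)$, and hence $\omega p^\ast$ lies in the relative interior of $\omega\,\Marg(\Gamma,d)$. The coordinate of $\omega p^\ast$ at a row $(\sigma, i_\sigma)$ equals $\frac{\omega}{|\mathcal{D}|}\,\#\{\, j \in \mathcal{D} : j_\sigma = i_\sigma \,\}$, and the number of columns with $j_\sigma = i_\sigma$ is $\prod_{t \notin \sigma} d_t = |\mathcal{D}| / \prod_{t \in \sigma} d_t$. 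Because $\sigma$ is a facet and all facets have weight $\omega$, this count is exactly $|\mathcal{D}|/\omega$, so the coordinate equals $1$. Thus $\omega p^\ast = \mathbf{1}$ is a lattice point in the relative interior of $\omega\,\Marg(\Gamma,d)$, giving $\mathrm{codeg}(\Marg(\Gamma,d)) \leq \omega$; together with Proposition~\ref{Prop:codegreelowerbound} this yields equality.

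For the uniqueness claim, let $y$ be any lattice point in the relative interior of $\omega\,\Marg(\Gamma,d)$. As observed in the proof of Proposition~\ref{Prop:codegreelowerbound}, each $y_{(\sigma, i_\sigma)} \geq 1$, since $x_{(\sigma, i_\sigma)} \geq 0$ is facet-defining and $y$ is an integral point of the relative interior; and for each facet $\sigma$ we have $\sum_{i_\sigma \in \mathcal{D}_\sigma} y_{(\sigma, i_\sigma)} = \omega$, because each column has exactly one nonzero entry among the $\sigma$-rows and the dilation factor is $\omega$. In the equal-weight case the number of rows attached to $\sigma$ is $|\mathcal{D}_\sigma| = \prod_{t \in \sigma} d_t = \omega$, so this is a sum of exactly $\omega$ integers, each at least $1$, equalling $\omega$; hence every $y_{(\sigma, i_\sigma)} = 1$. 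Letting $\sigma$ range over all facets, and noting that every row carries a facet label, forces $y = \mathbf{1}$.

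I do not expect a serious obstacle here; the only routine checks are the standard fact that a strictly positive convex combination of the vertices lies in the relative interior and the counting identity $\#\{\, j : j_\sigma = i_\sigma \,\} = |\mathcal{D}|/\omega$. The real content, and the step I would highlight, is the observation that equal facet weights are exactly the condition that makes the centroid of the vertices land on the integer point $\mathbf{1}$; this single fact simultaneously sharpens the codegree bound and, through the pigeonhole count in the uniqueness argument, pins down the interior lattice point.
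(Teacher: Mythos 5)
Your proof is correct and follows essentially the same route as the paper: the centroid of the vertices is computed coordinatewise to be $\mathbf{1}$ in the $\omega$-dilate, and uniqueness follows from the pigeonhole count that each facet's $\omega$ coordinates are positive integers summing to $\omega$. The paper phrases the uniqueness step via the coordinate projection onto the dilated probability simplex $\omega\,\Marg(\sigma,d_\sigma)$, but the underlying counting argument is identical to yours.
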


\begin{proof}
Let $\mathcal{D} = \prod\limits_{i=1}^n [d_i]$. 
By Proposition \ref{Prop:codegreelowerbound} it suffices to show that there exists a unique lattice point in the relative interior of $\omega(\Gamma,d)\Marg(\Gamma,d)$. We take a convex combination of all the vertices of $\omega(\Gamma,d)\Marg(\Delta,d)$, with all vertices weighted equally with
weight $1/|\mathcal{D}|$.
Consider the $i_\sigma$ coordinate:
\[
\left(\sum\limits_{j \in \mathcal{D}} \frac{1}{|\mathcal{D}|} \omega(\Gamma,d) a^j\right)_{i_\sigma} 
= \frac{\omega(\Gamma,d)}{|\mathcal{D}|} \sum\limits_{\substack{j \in \mathcal{D} \\ i_\sigma = j_\sigma}} 1 
= \frac{\omega(\Gamma,d)}{|\mathcal{D}|} \times \frac{|\mathcal{D}|} {\omega(\Gamma,d)} = 1
\]
Since each vertex has nonzero weight in this convex combination, $\mathbf{1}$ lies in the relative interior of $\Marg(\Gamma,d)$. 
To see that it is unique, for each facet $\sigma$ consider the coordinate projection $\pi_\sigma:\omega(\Gamma,d)\Marg(\Gamma,d) \to \omega(\Gamma,d)\Marg(\sigma,d_\sigma)$. 
Given $v$ in the relative interior of $\Marg(\Gamma,d)$, 
$\pi_\sigma(v)$ lies in the relative interior of $\Marg(\sigma,d_\sigma)$. This marginal polytope is the probability simplex. Since all facets have equal weight $\omega(\Gamma,d) = \omega(\sigma,d_\sigma)$. A unique lattice point in the relative interior of $\omega(\Gamma,d)\Marg(\sigma,d_\sigma)$ must have positive integer coordinates summing to $\omega(\Gamma,d)$, and $\mathbf{1}$ is the only such lattice point. 
Consequently $\pi_\sigma(v) = \mathbf{1}$. But $\pi_\sigma$ is a coordinate projection, so $v = \mathbf{1}$. Hence $\omega(\Gamma,d) \Marg(\Gamma,d)$ has a unique lattice point in the relative interior.
\end{proof}

It is also possible to produce lattice points in the relative interior of 
$\omega(\Gamma, d) \Marg(\Gamma,d)$ in more general circumstances. For instance:

\begin{Lemma}
\label{lemma:codegdivide}
Let $\Gamma$ be a simplicial complex on $[n]$ with weight vector $d \in \mathbb{Z}_{\geq 2}^n$.
Suppose that for each facet $\sigma$ of $\Gamma$, the weight divides $\omega(\Gamma,d)$. Then the codegree of $\Marg(\Gamma,d)$ is $\omega(\Gamma,d)$.
\end{Lemma}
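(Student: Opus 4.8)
The plan is to mirror the argument of Lemma~\ref{lemma:codegreeMarg} by producing an explicit lattice point in the relative interior of $\omega(\Gamma,d)\Marg(\Gamma,d)$. Since Proposition~\ref{Prop:codegreelowerbound} already yields $\mathrm{codeg}(\Marg(\Gamma,d)) \geq \omega(\Gamma,d)$, it suffices to exhibit a single interior lattice point at the dilation factor $\omega(\Gamma,d)$; this forces the reverse inequality and hence equality.

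First I would take the uniform convex combination of the scaled vertices, assigning weight $1/|\mathcal{D}|$ to each $\omega(\Gamma,d)a^j$, where $\mathcal{D} = \prod_{i=1}^n [d_i]$, and call the resulting point $y$. Exactly as in the equal-weight case, I would compute the $(\sigma,i_\sigma)$-coordinate: writing $w(\sigma) = \prod_{k \in \sigma} d_k$ for the weight of a facet $\sigma$, the number of indices $j \in \mathcal{D}$ with $j_\sigma = i_\sigma$ equals $|\mathcal{D}|/w(\sigma)$, so that
\[
y_{(\sigma,i_\sigma)} = \frac{\omega(\Gamma,d)}{|\mathcal{D}|}\cdot\frac{|\mathcal{D}|}{w(\sigma)} = \frac{\omega(\Gamma,d)}{w(\sigma)}.
\]
The crucial point, and the only place the hypothesis enters, is that the divisibility assumption $w(\sigma)\mid\omega(\Gamma,d)$ makes each coordinate $\omega(\Gamma,d)/w(\sigma)$ a positive integer, so $y$ is a lattice point. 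Since every vertex appears with strictly positive weight $1/|\mathcal{D}|$ in the convex combination, $y$ lies in the relative interior of $\omega(\Gamma,d)\Marg(\Gamma,d)$, which completes the argument.

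I do not anticipate a serious obstacle. Unlike Lemma~\ref{lemma:codegreeMarg}, uniqueness of the interior lattice point is neither asserted nor true here when the facet weights differ, as the earlier $[12][23]$ example with $d=(3,2,2)$ already illustrates, so the entire proof collapses to the coordinate computation above together with the observation that the divisibility hypothesis is precisely what keeps the centroid integral. The one step worth stating carefully is the standard fact, used implicitly in Lemma~\ref{lemma:codegreeMarg} as well, that a convex combination of all the vertices with strictly positive coefficients always lands in the relative interior; this is what justifies the final claim.
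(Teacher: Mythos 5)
Your proposal is correct and follows essentially the same route as the paper's proof: both take the uniform convex combination of all scaled vertices, compute each $(\sigma,i_\sigma)$-coordinate as $\omega(\Gamma,d)/w(\sigma)$, and observe that the divisibility hypothesis is exactly what makes this a lattice point, while positivity of all coefficients places it in the relative interior. Your explicit remark that uniqueness fails here (unlike in Lemma~\ref{lemma:codegreeMarg}) and your care about the relative-interior justification are minor clarifications, not a different argument.
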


\begin{proof}
    The proof is the same as the first part of Lemma \ref{lemma:codegreeMarg}.
Let $\mathcal{D} = \prod\limits_{i=1}^n [d_i]$. 
By Proposition \ref{Prop:codegreelowerbound} it suffices to show that there exists a  lattice point in the relative interior of $\omega(\Gamma,d)\Marg(\Gamma,d)$. Let $v$ be the convex combination of all the vertices of $\omega(\Gamma,d)\Marg(\Delta,d)$, with all vertices weighted equally with
weight $1/|\mathcal{D}|$. 

Let $w_\sigma$ denote the weight of the face $F$.
Consider the $i_\sigma$ coordinate:
\[
\left(\sum\limits_{j \in \mathcal{D}} \frac{1}{|\mathcal{D}|} \omega(\Gamma,d) a^j\right)_{i_\sigma} 
= \frac{\omega(\Gamma,d)}{|\mathcal{D}|} \sum\limits_{\substack{j \in \mathcal{D} \\ i_\sigma = j_\sigma}} 1 
= \frac{\omega(\Gamma,d)}{|\mathcal{D}|}  \frac{|D|}{w_\sigma}  =  \frac{\omega(\Gamma,d)}{w_\sigma}.
\]
Since the weight $w_\sigma$ divides $\omega(\Gamma,d)$ this is always an integer, and hence 
$v$ is an integer point in $\omega(\Gamma,d)\Marg(\Gamma,d)$.  It is also in the relative
interior of $\omega(\Gamma,d) \Marg(\Gamma, d)$, so this shows that
$\mathrm{codeg}(\Marg(\Gamma, d)) = \omega(\Gamma,d)$.
\end{proof}

We can relate these results back to the weak maximum likelihood threshold of 
the hierarchical models.  So we deduce lower bounds for the amount of data that is
needed for there to be any chance of data point for which the maximum likelihood estimate
exists.  For example, if $\Gamma$ is a graph, and $d = {\bf 2}$ then a classification of
when $A_{\Gamma, {\bf 2}}$ is normal is known, and from this we get sharp results on the weak 
maximum likelihood threshold.

\begin{Cor}
    Let $\Gamma$ be a graph that is free of $K_4$ minors and contains an edge. 
    Then
    \[
\wmlt(A_{\Gamma, {\bf 2}}) = 4.
    \]
\end{Cor}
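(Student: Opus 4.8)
The plan is to combine the codegree lower bound we have just established with the known normality classification for graphical models with binary states. The corollary asserts that $\wmlt(A_{\Gamma,\mathbf 2}) = 4$ for any $K_4$-minor-free graph $\Gamma$ containing at least one edge. The key structural fact is that such graphs are exactly the graphs of treewidth at most two (series-parallel graphs), and for these the matrix $A_{\Gamma,\mathbf 2}$ is known to be normal. So the strategy is: first pin down the codegree via $\omega(\Gamma,\mathbf 2)$, then upgrade the codegree equality to a $\wmlt$ equality using normality through Corollary~\ref{cor:normal}.

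First I would compute $\omega(\Gamma,\mathbf 2)$. Since $\Gamma$ contains an edge, it has a facet that is an edge $\{i,j\}$, whose weight is $d_i d_j = 2\cdot 2 = 4$; since $\Gamma$ is a graph, its facets are edges and possibly isolated vertices, so the maximum facet weight is exactly $4$. (Here one should confirm $\Gamma$ has no triangular or higher faces — as a graph its faces have at most two elements, so the largest possible weight is $4$, attained by any edge.) By Proposition~\ref{Prop:codegreelowerbound} this gives $\mathrm{codeg}(\mathrm{Marg}(\Gamma,\mathbf 2)) \geq 4$. Since every facet is either an edge of weight $4$ or a vertex of weight $2$, and both $2$ and $4$ divide $\omega(\Gamma,\mathbf 2) = 4$, Lemma~\ref{lemma:codegdivide} applies directly and yields $\mathrm{codeg}(\mathrm{Marg}(\Gamma,\mathbf 2)) = 4$.

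Next I would invoke normality. The crucial external input is the theorem (originally from the work on the Markov complexity / toric ideals of graphical models, e.g.\ results attributed to such classifications) that $A_{\Gamma,\mathbf 2}$ is normal precisely when $\Gamma$ has no $K_4$ minor. Given the $K_4$-minor-free hypothesis, $A_{\Gamma,\mathbf 2}$ is normal, so Corollary~\ref{cor:normal} promotes the codegree computation to the equality $\wmlt(\calm_{\Gamma,\mathbf 2}) = \mathrm{codeg}(\mathrm{Marg}(\Gamma,\mathbf 2)) = 4$, which is the claim.

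The main obstacle is \emph{not} in the codegree bookkeeping, which is routine given the two lemmas, but in correctly citing and applying the normality classification: I must verify that the cited result is stated for $A_{\Gamma,\mathbf 2}$ in the sense of normality used in this paper (Definition of normal matrix above), and that ``free of $K_4$ minors'' is exactly the combinatorial hypothesis under which normality holds. A secondary subtlety is that normality of the matrix $A$ is needed, and the remark after the polytope-normality definition warns that matrix-normality and polytope-normality coincide only when the lattice spanned by the columns equals $\zz^n \cap \cone(A)$; I would check that the classification result gives matrix-normality directly (so that Corollary~\ref{cor:normal}, stated for normal matrices, applies without this caveat). The contrast with the $K_4$ case (Example~\ref{ex:k4}), where the codegree is strictly below $\wmlt$ because $A_{\Gamma,\mathbf 2}$ fails to be normal, shows that the $K_4$-minor-free hypothesis is exactly what is needed and cannot be dropped.
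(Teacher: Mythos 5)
Your proposal is correct and follows essentially the same route as the paper's own proof: establish $\mathrm{codeg}(\mathrm{Marg}(\Gamma,\mathbf 2)) = 4$ via Lemma~\ref{lemma:codegdivide}, then invoke the classification that $A_{\Gamma,\mathbf 2}$ is normal exactly when $\Gamma$ is $K_4$-minor-free and apply Corollary~\ref{cor:normal}. Your additional bookkeeping (checking that facet weights $2$ and $4$ both divide $\omega(\Gamma,\mathbf 2)=4$, and flagging the matrix-versus-polytope normality caveat) is a more careful rendering of the same argument, not a different one.
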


\begin{proof}
    Lemma \ref{lemma:codegdivide} shows that the codegree of $A_{\Gamma, {\bf 2}} = 4$ ,
    for any graph $\Gamma$ that contains an edge.  
    The main result of \cite{Sullivant2010} is that $A_{\Gamma, {\bf 2}}$
    is normal if and only if $\Gamma$ does not have a $K_4$ minor.
    Hence by Corollary \ref{cor:normal}, $\wmlt(A_{\Gamma, {\bf 2}}) = 4.$
\end{proof}

\begin{Ex}  \label{ex:k4}
Note that the example of $\Gamma = K_4$ is an interesting one from the standpoint
of the weak maximum likelihood threshold.  In that case, 
$4 \Marg(K_4, {\bf 2})$ contains exactly one interior lattice point, but it is
not a sum of $4$ lattice points in $\Marg(K_4, {\bf 2})$.  This lattice point is the
unique hole in the semigroup generated by $A_{K_4, {\bf 2}}$  (\cite{Hemmecke2009}), so we have that   $\wmlt(A_{K_4, {\bf 2}}) = 5$.
\end{Ex}

\begin{Ques}
What is the weak maximum likelihood threshold of the graphs $\Gamma$ that do
contain a $K_4$ minor?  Can the gap between the degree and the weak maximum likelihood
threshold be arbitrarily large in this case?
\end{Ques}


\section{Fiber Products and Gorenstein Decomposable Models} \label{sec:Gorenstein}

In this section we study Gorenstein hierarchical models,
and in particular classify the decomposable models that are Gorenstein.
A key tool is a result of Dinu and Vodi\v{c}ka \cite{Dinu2021} which
give a description of when the fiber product
of polytopes preserves the Gorenstein property.  
The fiber product of polytopes is closely related to
the toric fiber product construction \cite{Sullivant2007}.

\begin{Def}
    An integer polytope $\calp$ is called \emph{Gorenstein} of index $k$ if $\calp$ is normal,
    $kP$ has a unique interior lattice point $v$, and, for each facet of $k\calp$, $v$ is  lattice
    distance $1$ away from the facet.  
\end{Def}

The condition that $v$ is lattice distance $1$ from the facet can be expressed as follows.
If $ax \leq b$ is a facet defining inequality of $\calp$ with 
$a \in \mathbb{Z}^n$ and $\gcd(a_1, \ldots, a_n) = 1$,
then $av  =  b-1$.

Gorenstein cut polytopes were classified in \cite{Ohsugi2014}.  
As a consequence of this classification,
we also get a classification of Gorenstein hierarchical models when the underlying
simplicial complex $\Gamma$ is a graph, and $d = {\bf 2}$.  Recall that
a graph is called \emph{chordal} if it does not have any induced cycles of
length $\geq 4$.

\begin{Th}\label{thm:ohsugicor}
   Let $\Gamma$ be a graph that contains an edge.  
   Then $\Marg(\Gamma, {\bf 2} )$ is Gorenstein if and only if
   $\Gamma$ is chordal, free of $K_4$ minors, and has no isolated vertices.
\end{Th}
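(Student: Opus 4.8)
The plan is to deduce the theorem from the classification of Gorenstein cut polytopes in \cite{Ohsugi2014}, after identifying $\Marg(\Gamma,\mathbf{2})$ with a cut polytope. For a graph $\Gamma$ on $[n]$, let $\nabla\Gamma$ be its \emph{suspension}, obtained by adjoining an apex vertex $0$ adjacent to every vertex of $\Gamma$. The first step is to recall the cut--model correspondence (Sturmfels--Sullivant): under the binary covariance change of coordinates, which sends the $0/1$-indicator vertex $a^j$ to the cut determined by $\{\,i : j_i = 2\,\}$ in $\nabla\Gamma$ (the edge $\{i,i'\}$ of $\Gamma$ recording whether $j_i \neq j_{i'}$ and the edge $\{0,i\}$ recording whether $j_i = 2$), the affine semigroup generated by the columns of $A_{\Gamma,\mathbf 2}$ is isomorphic, in the lattice it generates, to the one generated by the cut vectors of $\nabla\Gamma$. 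Since the Gorenstein property defined above is exactly the Gorenstein property of the associated affine semigroup ring, it is preserved by this isomorphism, so it suffices to decide when $\mathrm{Cut}(\nabla\Gamma)$ is Gorenstein. The one subtlety to track here is that the natural lattice for a cut polytope can be a proper sublattice of $\mathbb{Z}^{E}$ (already for $\nabla K_2 = K_3$ the cut vectors generate only the even sublattice), so the correspondence must be stated with the lattice generated by the cut vectors throughout.

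Next I would apply the classification of \cite{Ohsugi2014}, which describes Gorenstein-ness of $\mathrm{Cut}(G)$ by graph-theoretic conditions on $G$, and translate these conditions through the suspension $G = \nabla\Gamma$. Two translations are clean. First, the largest clique minor satisfies $h(\nabla\Gamma) = h(\Gamma)+1$: a $K_4$ minor of $\Gamma$ together with the apex as a fifth branch set is a $K_5$ minor of $\nabla\Gamma$, and conversely, in any $K_5$ minor of $\nabla\Gamma$ the apex lies in a single branch set, and the remaining four branch sets avoid the apex and hence form a $K_4$ minor of $\Gamma$. Thus $\nabla\Gamma$ is $K_5$-minor-free iff $\Gamma$ is $K_4$-minor-free, matching the normality criterion supplied by \cite{Sullivant2010} and Corollary~\ref{cor:normal}. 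Second, $\nabla\Gamma$ is chordal iff $\Gamma$ is: chordality descends to the induced subgraph $\Gamma$, and ascends because any cycle of length $\geq 4$ through the universal apex vertex has an immediate chord from the apex to a non-neighbouring cycle vertex.

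The remaining condition, absence of isolated vertices, I would argue directly. If $\Gamma$ has an isolated vertex $v$ then, since $\Gamma$ has an edge, write $\Gamma = \Gamma' \sqcup \{v\}$ with $\Gamma'$ containing an edge; correspondingly $\Marg(\Gamma,\mathbf{2}) \cong \Marg(\Gamma',\mathbf{2}) \times \Delta_1$, a fiber product over a point. Here $\Delta_1$ is Gorenstein of index $2$, while $\mathrm{codeg}(\Marg(\Gamma',\mathbf{2})) = 4$ by Proposition~\ref{Prop:codegreelowerbound}, so $\Marg(\Gamma',\mathbf{2})$, if Gorenstein, has index $4$. The fiber product criterion of \cite{Dinu2021} used in this section forces the two factors of a Gorenstein product to share a common index; since $2 \neq 4$, the product is not Gorenstein, so an isolated vertex obstructs the property. (This matches the cut picture: an isolated vertex of $\Gamma$ is a pendant of $\nabla\Gamma$, so the apex edge is a bridge and $\mathrm{Cut}(\nabla\Gamma)$ splits off a segment factor of index $2$.) Conversely, when $\Gamma$ has no isolated vertices every facet of the complex is an edge of weight $4$, so all facets have equal weight and Lemma~\ref{lemma:codegreeMarg} already furnishes the unique interior lattice point $\mathbf{1}$ of $4\,\Marg(\Gamma,\mathbf{2})$ demanded by the definition of Gorenstein, leaving only the reflexivity of this dilate to be certified by the cut polytope classification.

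Combining the three translations, $\Marg(\Gamma,\mathbf{2})$ is Gorenstein iff $\nabla\Gamma$ satisfies the \cite{Ohsugi2014} criterion and has no apex pendant, which is equivalent to $\Gamma$ being chordal, free of $K_4$ minors, and free of isolated vertices. The main obstacle is the first step coupled with the term-by-term translation: one must set up the cut--model isomorphism with the correct lattice on the cut side and then check that the graph conditions of \cite{Ohsugi2014} for $\nabla\Gamma$ correspond \emph{exactly}, and with nothing left over, to the three stated conditions on $\Gamma$. Verifying that no residual condition on $\nabla\Gamma$ survives the suspension, and that chordality is genuinely the extra constraint beyond $K_4$-minor-freeness (witnessed by $C_4$, which is $K_4$-minor-free but non-chordal and should fail to be Gorenstein), is the delicate part.
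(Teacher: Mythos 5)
Your proposal follows essentially the same route as the paper: identify $\Marg(\Gamma,\mathbf{2})$ with the cut polytope of the suspension and translate the classification of \cite{Ohsugi2014} through the suspension (chordality and $K_5$-minor-freeness transfer exactly as you describe, and your isolated-vertex argument is just a direct version of the paper's use of the bridgeless condition, since an isolated vertex of $\Gamma$ makes the apex edge a bridge). The one piece you leave implicit is dispatching the other branch of Ohsugi's classification (bipartite graphs free of $C_6$ and $K_5$ minors), which the paper handles in one line: the suspension of a graph containing an edge contains a triangle and so is never bipartite.
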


\begin{proof}
    Here we use the result of \cite{Ohsugi2014} which classified Gorenstein cut polytopes.
    Cut polytopes $\mathrm{Cut}^\square(G)$ are associated to graphs $G$.  
    If $\Gamma$ is a graph then $\Marg(\Gamma, {\bf 2}) \cong \mathrm{Cut}^\square(\hat\Gamma)$
    where $\hat\Gamma$ is the suspension of $\Gamma$, obtained by taking $\Gamma$ and
    adding a new vertex which $v_0$ and edges from $v_0$ to every vertex of $\Gamma$.
    
    In Ohsugi's classification of Gorenstein cut polytopes, there are two types of
    graphs that  yield Gorenstein cut polytope.  These are
    \begin{enumerate}
        \item Bipartite graphs, free of $C_6$ minors and, $K_5$ minors
        \item Bridgeless chordal graphs, free of $K_5$ minors.
     \end{enumerate}
    The first type of graph cannot appear as the suspension of a graph with an edge (since
    such a suspension must have triangles and hence cannot be bipartite).  
    A suspension of a graph
    yields a graph of the second type if and only if it is a chordal graph with no $K_4$ minors
    and no isolated vertices.
\end{proof}    

Our goal in this section is to develop further cases where we can prove
that the Gorenstein property is satisfied for hierarchical models.  
A useful observation is related to the weight of facets.

\begin{Prop}
\label{prop:sameWeight}
    Suppose that $\Marg(\Gamma, {\bf d} )$ is Gorenstein.  Then all facets of $\Gamma$ have the same weight.      
\end{Prop}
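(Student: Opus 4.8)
The plan is to exploit the very concrete facet structure of $\Marg(\Gamma,\mathbf{d})$ together with the defining property of a Gorenstein polytope to pin down its interior lattice point completely. Write $\calp = \Marg(\Gamma, \mathbf{d})$ and let $k$ be the Gorenstein index, so that $k\calp$ has a unique interior lattice point $v$ which lies at lattice distance $1$ from every facet of $k\calp$. I will use two inputs already available: first, the Proposition proved above guarantees that for every facet $\sigma$ of $\Gamma$ and every $i_\sigma \in \mathcal{D}_\sigma$ the inequality $x_{(\sigma,i_\sigma)} \ge 0$ is facet defining (for the cone, hence, after intersecting with the affine hull, for the polytope $\calp$ itself); second, the computation in the proof of Proposition \ref{Prop:codegreelowerbound} shows that every $x \in k\calp$ satisfies the block-sum identity $\sum_{i_\sigma \in \mathcal{D}_\sigma} x_{(\sigma,i_\sigma)} = k$ for each facet $\sigma$, because each column $a^j$ has exactly one nonzero entry among the coordinates indexed by $(\sigma, \cdot)$.

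The first step is to evaluate the lattice-distance-$1$ condition on these nonnegativity facets. The inequality $x_{(\sigma,i_\sigma)} \ge 0$ has primitive integral normal form $-x_{(\sigma,i_\sigma)} \le 0$, so in the notation of the Gorenstein definition we have $a = -e_{(\sigma,i_\sigma)}$ and $b = 0$. Since these facets pass through the origin the right-hand side is $0$, and the lattice-distance condition $av = b-1$ reads $-v_{(\sigma,i_\sigma)} = -1$, forcing $v_{(\sigma,i_\sigma)} = 1$. As this holds for every coordinate $(\sigma,i_\sigma)$, we conclude $v = \mathbf{1}$, in agreement with Lemma \ref{lemma:codegreeMarg}.

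The second step is to read off the weights. Fix any facet $\sigma$ of $\Gamma$. Applying the block-sum identity to the interior point $v \in k\calp$ and using $v_{(\sigma,i_\sigma)} = 1$ from the previous step gives
\[
k = \sum_{i_\sigma \in \mathcal{D}_\sigma} v_{(\sigma,i_\sigma)} = \sum_{i_\sigma \in \mathcal{D}_\sigma} 1 = |\mathcal{D}_\sigma| = \prod_{t \in \sigma} d_t,
\]
which is exactly the weight of $\sigma$. Since the right-hand side equals the single index $k$ for every facet $\sigma$, all facets of $\Gamma$ have the same weight (namely $k$), as claimed; in particular this forces $k = \omega(\Gamma,\mathbf{d})$, consistent with Proposition \ref{Prop:codegreelowerbound}.

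I do not expect a substantial obstacle here: the argument is essentially a direct evaluation of the reflexivity condition against an explicitly known family of facets. The only point requiring care is the translation between the cone and the polytope — one must confirm that each $x_{(\sigma,i_\sigma)} \ge 0$, known to be facet defining for $\mathrm{cone}(A_{\Gamma,\mathbf{d}})$, restricts to a genuine facet of $\calp$ so that the Gorenstein lattice-distance condition may legitimately be applied to it, and that the primitive normal vector is indeed $-e_{(\sigma,i_\sigma)}$ so that $b=0$. Once this bookkeeping is settled, the conclusion is immediate.
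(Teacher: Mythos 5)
Your proof is correct and follows essentially the same route as the paper: the nonnegativity facets $x_{(\sigma,i_\sigma)} \geq 0$ force the unique interior lattice point to be $\mathbf{1}$, and the block-sum identity $\sum_{i_\sigma} x_{(\sigma,i_\sigma)} = k$ then forces every facet weight to equal the index $k$. Your write-up is somewhat more explicit than the paper's (which compresses the second step into the assertion that $\mathbf{1}$ lies in the marginal cone iff all facets have equal weight), but the underlying argument is identical.
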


\begin{proof}
    Since the inequalities $x_{(\sigma, i_\sigma)} \geq 0$ are facet defining, the only point that is possible
    to be lattice distance one from each facet in some dilate is the point ${\bf 1}$.  However, ${\bf 1}$ is
    in the marginal cone if and only if all facets of $\Gamma$ have the same weight.
\end{proof}

\begin{Cor}
   If $\Marg(\Gamma, {\bf 2} )$ is Gorenstein then $\Gamma$ is pure.
\end{Cor}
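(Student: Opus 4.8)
The plan is to deduce this directly from Proposition~\ref{prop:sameWeight}, which already establishes that whenever a marginal polytope $\Marg(\Gamma, \mathbf{d})$ is Gorenstein, all facets of $\Gamma$ share a common weight. The only remaining work is to translate this ``equal weight'' conclusion into the ``equal dimension'' statement that defines purity, using the special structure of the binary labeling $d = \mathbf{2}$.

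First I would recall that the weight of a face $\sigma$ is $\prod_{k \in \sigma} d_k$, so when $d = \mathbf{2}$ the weight of a facet $\sigma$ is simply $2^{|\sigma|}$. Applying Proposition~\ref{prop:sameWeight}, the hypothesis that $\Marg(\Gamma, \mathbf{2})$ is Gorenstein tells us that $2^{|\sigma|}$ takes the same value over all facets $\sigma$ of $\Gamma$. Since $x \mapsto 2^x$ is injective on the integers, an equality $2^{|\sigma|} = 2^{|\tau|}$ forces $|\sigma| = |\tau|$, so all facets of $\Gamma$ have the same cardinality, equivalently the same dimension. As a simplicial complex is pure precisely when all of its facets have equal dimension, this gives that $\Gamma$ is pure.

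There is no substantive obstacle here, since the corollary is an immediate specialization of Proposition~\ref{prop:sameWeight}. The only point requiring a word of care is the passage from equal weights to equal cardinalities: this step uses the injectivity of the exponential and would fail for a general labeling $d$ (for instance, a facet $\{i,j\}$ with $d_i = 2$, $d_j = 3$ and a facet $\{k\}$ with $d_k = 6$ have equal weight $6$ but different dimensions), yet it holds automatically in the binary case because the weight $2^{|\sigma|}$ determines the facet size.
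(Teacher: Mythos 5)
Your proposal is correct and matches the paper's argument exactly: both invoke Proposition~\ref{prop:sameWeight} to get equal weights, then observe that for $d = \mathbf{2}$ the weight of a facet $\sigma$ is $2^{\#\sigma}$, so equal weights force equal facet cardinalities and hence purity. The extra remark about injectivity of $x \mapsto 2^x$ is a harmless elaboration of the same step.
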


\begin{proof}
    If ${\bf d} = {\bf 2}$ then the weight of a facet $\sigma$  is $2^{\# \sigma} $. For all these weights to be 
    equal, all facets must have the same dimension.
\end{proof}

One simple situation where we know that the Gorenstein property is preserved is when
a simplicial complex is a cone over another simplicial complex.  

\begin{Def}
    Let $\Gamma$ be a simplicial complex on $[n]$.  Let $C(\Gamma)$ denote the new simplicial complex with one
    more vertex $n+1$ with faces $C(\Gamma) = \Gamma \cup \{  \sigma \cup \{n+1 \} :  \sigma \in \Gamma \}.$
    This new simplicial complex is the \emph{cone over $\Gamma$}.
    The complex $C^r(\Gamma)$ is obtained by iterating the cone construction $r$ times.
\end{Def}

\begin{Prop}
    Let $\Gamma$ be a simplicial complex.   Let
    $d'$ be the vector $d$ with $d_{n+1}$  appended.
    Then $\Marg(\Gamma, d )$ is Gorenstein of index $k$ if and only if
     $\Marg(C(\Gamma), d' )$ is Gorenstein of index $k d_{n+1}$.
\end{Prop}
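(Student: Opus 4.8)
The plan is to exploit the block structure of the design matrix $A_{C(\Gamma),d'}$. The facets of $C(\Gamma)$ are exactly the sets $\sigma\cup\{n+1\}$ with $\sigma$ a facet of $\Gamma$, and $\mathcal D_{\sigma\cup\{n+1\}}=\mathcal D_\sigma\times[d_{n+1}]$. Ordering both rows and columns of $A_{C(\Gamma),d'}$ by the value $m\in[d_{n+1}]$ of the new coordinate, one checks directly that $A_{C(\Gamma),d'}$ is block diagonal with $d_{n+1}$ identical blocks, each equal to $A_{\Gamma,d}$; concretely the column indexed by $(j,m)$ is $e_m\otimes a^j$. Writing $r=d_{n+1}$, $P=\Marg(\Gamma,d)$ and $P'=\Marg(C(\Gamma),d')$, it follows that $\cone(A_{C(\Gamma),d'})=\cone(A_{\Gamma,d})^{\,r}$ is a product of cones in orthogonal coordinate blocks and that $P'$ is the free join of $r$ copies of $P$. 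I would then verify that each of the three defining conditions of ``Gorenstein of index $k$'' transfers, in both directions, between $P$ of index $k$ and $P'$ of index $rk$; note $k=\omega(\Gamma,d)$ by Proposition~\ref{prop:sameWeight} and Lemma~\ref{lemma:codegreeMarg}, so $rk=\omega(C(\Gamma),d')$ as the statement predicts.

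The combinatorics of lattice points in dilates drives the argument. Because every facet of $C(\Gamma)$ contains the new vertex, for any lattice point $y\in mP'$ the block sum $m_\ell:=\sum_{i_\sigma}y_{(\sigma\cup\{n+1\},(i_\sigma,\ell))}$ is independent of the facet $\sigma$ and is a nonnegative integer, with $\sum_\ell m_\ell=m$. Hence lattice points of $mP'$ correspond to tuples $(y_1,\dots,y_r)$ with $y_\ell$ a lattice point of $m_\ell P$ and $\sum_\ell m_\ell=m$, and the lattice points of $P'$ itself are exactly the $e_\ell\otimes p$ with $p$ a lattice point of $P$. Decomposing and reassembling block by block shows $P'$ is normal if and only if $P$ is (this is the notion of normality underlying Corollary~\ref{cor:normal}). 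Moreover $y$ lies in the relative interior of $mP'$ precisely when every $m_\ell\ge 1$ and each $y_\ell$ lies in the relative interior of $m_\ell P$, the standard description of the relative interior of a join. Consequently $\mathrm{codeg}(P')=r\,\mathrm{codeg}(P)$, and if $P$ is Gorenstein of index $k$ then in $rkP'$ every $m_\ell$ must equal $k$, forcing the unique interior lattice point to be $(\mathbf 1,\dots,\mathbf 1)=\mathbf 1$, assembled from the unique interior lattice point $\mathbf 1$ of $kP$. Running the same count backwards recovers the unique interior point and the index $k$ of $P$ from those of $P'$.

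It remains to match the lattice-distance-one condition. Since $\cone(A')=\cone(A)^{\,r}$, every facet of $P'$ is carried by a single block, with supporting hyperplane $\langle a,x_\ell\rangle=0$ for a primitive facet normal $a$ of $\cone(A_{\Gamma,d})$ sitting in block $\ell$. I would show the lattice distance from $\mathbf 1$ to this facet of $rkP'$ equals the lattice distance from $\mathbf 1$ to the corresponding facet of $kP$, so the distance-one conditions agree facet by facet and transfer in both directions. I expect this to be the main obstacle: $\cone(A_{\Gamma,d})$ is not full dimensional, because summing the rows over each facet yields the all-ones vector and hence linear relations, so one must confirm that restricting a facet normal to a block preserves primitivity with respect to the affine lattice of $P'$, and that this affine lattice splits compatibly with the blocks. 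The clean way to organize this is to describe $\mathrm{aff}(P')$ explicitly as $\bigoplus_\ell e_\ell\otimes(\text{direction space of }P)$ together with the $(r-1)$-dimensional space of block translations, and check the induced lattice in each block is that of $P$.

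If the lattice bookkeeping in the last step becomes unwieldy, I would instead pass to semigroup algebras. Normality makes $\mathbb C[\mathbb N A']\cong\mathbb C[\mathbb N A]^{\otimes r}$ a graded tensor product of normal semigroup algebras, whose canonical module is the tensor product of the canonical modules; by the Danilov--Stanley description this module is generated by the interior lattice points, and a graded tensor product is Gorenstein exactly when each tensor factor is, with Gorenstein indices adding. This yields both implications simultaneously and gives index $rk=d_{n+1}\,\omega(\Gamma,d)=\omega(C(\Gamma),d')$. Either route, combined with the normality and unique-interior-point analysis above, establishes the claimed equivalence with the stated indices.
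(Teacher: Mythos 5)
Your proposal is correct and takes essentially the same route as the paper: both rest on the observation that $A_{C(\Gamma),d'}$ is block diagonal with $d_{n+1}$ copies of $A_{\Gamma,d}$, so that interior lattice points of the relevant dilate are forced to be $(v,\dots,v)$ and the facet conditions transfer block by block. The paper's own proof is a terse three-sentence version of this argument; your extra care with the join structure, normality, the forced block sums $m_\ell=k$, and the lattice-splitting needed for the distance-one condition fills in details the paper leaves implicit (and your semigroup-algebra alternative is a reasonable fallback but is not what the paper does).
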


\begin{proof}
    After rearranging rows and columns of $A_{C(\Gamma), d'}$, we see that this matrix has a block diagonal form isomorphic
    to
    \[
    A_{C(\Gamma), d'}  =  \begin{pmatrix}
    A_{\Gamma, d } &  0  & \cdots & 0 \\
    0  &  A_{\Gamma, d }  & \cdots  & 0 \\
    \vdots  & \vdots  & \ddots & \vdots  \\
    0 & 0 &   \cdots  &  A_{\Gamma, d } 
    \end{pmatrix}
    \]
    The facet defining inequalities for this are just repeated copies of the facet defining inequalities for $\Marg(\Gamma, d)$.
    If $v$  is the unique lattice point in the interior of $k \Marg(\Gamma, d )$  then 
    $(v, \ldots, v)$ is the unique lattice point in the interior  of $k d_{n+1} \Marg(C(\Gamma), d' )$ and it is
    distance one from all facets since $v$ is.
\end{proof}

A key situation
where we will get new results concerns reducible and decomposable hierarchical models.

\begin{Def}
    A simplicial complex $\Gamma$ is \emph{reducible} with reducible decomposition
    $(\Gamma_1, S, \Gamma_2)$ and separator $S \subseteq |\Gamma|$ if 
    $\Gamma = \Gamma_1 \cup \Gamma_2$, $\Gamma_1 \cap \Gamma_2 = 2^S$, and
    $\Gamma_1$ and $\Gamma_2$ are proper subsets of $\Gamma$.
\end{Def}

\begin{Def}
    A simplicial complex $\Gamma$ is \emph{decomposable} if it is either a simplex $2^S$,
    or it has a reducible decomposition $(\Gamma_1, S, \Gamma_2)$ where both $\Gamma_1$
    and $\Gamma_2$ are decomposable.
\end{Def}

Our main theorem in this section classifies the Gorenstein decomposable
hierarchical models.

\begin{Th}\label{thm:decomposable}
    Let $\Gamma$ be a decomposable simplicial complex and $d \in \mathbb{Z}_{\geq 2}^n$.
    Then $\Marg(\Gamma, d)$ is Gorenstein if and only if every facet of $\Gamma$ has the
    same weight $\omega(\Gamma, d)$.
\end{Th}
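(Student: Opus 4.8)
The plan is to treat the two implications separately, with the forward direction essentially free and the converse carrying all the weight. For the forward direction, suppose $\Marg(\Gamma, d)$ is Gorenstein; Proposition \ref{prop:sameWeight} already shows that all facets of $\Gamma$ then share a common weight, which must equal $\omega(\Gamma,d)$ by the definition of $\omega$. So the content is the converse: assuming $\Gamma$ is decomposable and every facet has weight $\omega := \omega(\Gamma,d)$, I want to conclude that $\Marg(\Gamma,d)$ is Gorenstein. By Lemma \ref{lemma:codegreeMarg} we already know that $\omega \Marg(\Gamma,d)$ has $\mathbf{1}$ as its unique interior lattice point, so the two remaining things to verify are normality of $\Marg(\Gamma,d)$ and that $\mathbf{1}$ is lattice distance one from every facet of $\omega\Marg(\Gamma,d)$; together these say $\Marg(\Gamma,d)$ is Gorenstein of index $\omega$.

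I would run an induction along the recursive definition of decomposability, on the number of facets of $\Gamma$. In the base case $\Gamma = 2^S$ is a simplex, and the columns $a^j$ of $A_{2^S, d_S}$ are exactly the standard basis vectors indexed by $\mathcal{D}_S$, so $\Marg(2^S, d_S)$ is the standard simplex $\Delta_{\omega - 1}$ with $\omega = \prod_{t \in S} d_t$; this is well known to be Gorenstein of index $\omega$, with its interior point $\mathbf{1}$ at lattice distance one from each facet $x_{i_S} \ge 0$. For the inductive step, write $\Gamma = \Gamma_1 \cup \Gamma_2$ with $\Gamma_1 \cap \Gamma_2 = 2^S$ a reducible decomposition into decomposable pieces. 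Since both pieces are proper subcomplexes, the facets of $\Gamma$ are exactly the facets of $\Gamma_1$ together with those of $\Gamma_2$, and each $\Gamma_i$ has strictly fewer facets. Hence every facet of $\Gamma_i$ again has weight $\omega$, and the induction hypothesis gives that $\Marg(\Gamma_1, d^{(1)})$ and $\Marg(\Gamma_2, d^{(2)})$ are each Gorenstein of index $\omega$, where $d^{(i)}$ denotes $d$ restricted to $|\Gamma_i|$.

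The heart of the argument is to realize $\Marg(\Gamma,d)$ as a fiber product and invoke Dinu--Vodi\v{c}ka. Using the toric fiber product description of \cite{Sullivant2007}, the factorization $\mathcal{D} = \mathcal{D}_1 \times_{\mathcal{D}_S} \mathcal{D}_2$ of the state space along the separator coordinates should identify
\[
\Marg(\Gamma, d) \;\cong\; \Marg(\Gamma_1, d^{(1)}) \times_{\Delta_S} \Marg(\Gamma_2, d^{(2)}),
\]
where $\Delta_S = \Marg(2^S, d_S)$ is the separator simplex and the two maps are the marginalizations to the $S$-margin. I would then check the numerical compatibility needed to apply the fiber product theorem of \cite{Dinu2021}: writing $\omega_S = \prod_{t \in S} d_t$, any facet $\sigma \supseteq S$ satisfies $\omega = \omega_S \cdot \prod_{t \in \sigma \setminus S} d_t$, so $\omega_S \mid \omega$, and the marginalization sends the interior point $\mathbf{1}$ of $\omega \Marg(\Gamma_i)$ to the common lattice point $(\omega/\omega_S)\mathbf{1}$ in the interior of $\omega\Delta_S$. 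With both factors Gorenstein of the common index $\omega$ and their interior points projecting consistently onto this central point of the gluing simplex, the Dinu--Vodi\v{c}ka criterion should yield that the fiber product is Gorenstein of index $\omega$, a conclusion that includes both normality of $\Marg(\Gamma,d)$ and the lattice-distance-one condition for $\mathbf{1}$ (normality of decomposable models is in any case classically known).

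The main obstacle I anticipate is matching hypotheses at the two interfaces. First, I must verify that $\Marg(\Gamma,d)$ is the fiber product in the precise sense used by \cite{Dinu2021}---that their gluing data, lattices, and grading coincide with what the toric fiber product produces---rather than merely an abstract pullback. Second, I must confirm that the index and distance conditions in their theorem are exactly met by the projections computed above, and not only compatible up to the divisibility $\omega_S \mid \omega$. If their statement is phrased for fiber products over a simplex with both factors Gorenstein of equal index, the computation above supplies precisely the required input; otherwise I expect to have to reconcile the indices (for instance by normalizing the factors to a common index, or by reading off the explicit facet description of the fiber product and checking the distance-one condition facet by facet).
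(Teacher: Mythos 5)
Your proposal is correct and follows essentially the same route as the paper: necessity via Proposition \ref{prop:sameWeight}, and sufficiency by induction on the number of facets, realizing $\Marg(\Gamma,d)$ as a fiber product of the two pieces over the separator simplex and applying the Dinu--Vodi\v{c}ka criterion after checking that both factors are Gorenstein of the common index $\omega(\Gamma,d)$ with interior points $\mathbf{1}$ projecting consistently. The paper merely packages the inductive step as a separate statement about reducible complexes (Theorem \ref{thm:reducible}); the content is identical.
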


A special case of Theorem \ref{thm:decomposable}, is when $d = {\bf 2}$, in which case
we see that a decomposable binary hierarchical model is Gorenstein if and only if $\Gamma$ is
pure (that is, all facets have the same dimension). 

One of the main tools we use to prove this is the fiber product of polytopes. 
Let $\calp_1 \subseteq \mathbb{R}^{d_1}$, $\calp_2 \subseteq \mathbb{R}^{d_2}$, 
and $\calq \subseteq \mathbb{R}^e$ be polytopes. 
The \emph{product polytope} of $\calp_1$ and $\calp_2$ is 
$\calp_1 \times \calp_2 = \{(x,y): x \in \calp_1 \text{ and } y \in \calp_2\}$.

\begin{Def} 
Let $\pi_1:\calp_1 \to \calq$ and $\pi_2:\calp_2 \to \calq$ be affine maps. 
The \emph{fiber product} of $\calp_1$ and $\calp_2$ with respect to the maps $\pi_1$ and $\pi_2$ is
\[
\calp_1 \times_\calq \calp_2 = \{(x,y) \in \calp_1 \times \calp_2: \pi_1(x) = \pi_2(y)\}.
\]
\end{Def}
We suppress the maps $\pi_1$ and $\pi_2$ from the notation. If $\pi_1(\calp_1) = \pi_2(\calp_2) = \{c\}$ for some $c \in \calq$, then $\calp_1 \times_\calq \calp_2 = \calp_1 \times \calp_2$. 

The toric fiber product \cite{Sullivant2007} is an important tool for studying 
ideals that arise in algebraic statistics.  The fiber product of polytopes is the resulting
combinatorial construction that arises from looking at the associated moment polytope
associated to the grading groups that arise.  Some relevant papers include 
\cite{Dinu2021, Engstrom2014, Rauh2016}.

When polytopes and their vertices are indexed by combinatorial structures, the fiber product of polytopes often corresponds to ``gluing rules'' for these structures. We see this in the following example.

\begin{Ex}
Let $\Gamma_1 = [12]$, $d^1 = (3,2)$, $\Gamma_2 = [23]$, and $d^2 = (2,2)$. 
The simplicial complexes $\Gamma_1$ and $\Gamma_2$ 
intersect in the simplex $S = [2]$ and have common vertex labeling $d^S = [2]$. 
We define marginalization maps $\pi_1:\Marg(\Gamma_1,d^1) \to \Marg(S,d^S)$ and 
$\pi_2: \Marg(\Gamma_2,d^2) \to \Marg(S,d^S)$ defined by
\[
\pi_1(x)_{(i_2,2)} = \sum\limits_{j \in [3] \times \{i_2\}} x_j \quad \text{and} \quad \pi_2(x) = \sum\limits_{j \in \{i_2\} \times [2]} x_j.
\]
Let $\Gamma = [12][23]$ and let $d = (3,2,2)$. 
For $i \in [d_1] \times [d_2] \times [d_3]$ we have
\[
\pi_1(a^{i_1i_2}) = u^{i_2} = \pi_2(a^{i_2i_3}).
\]
The fiber product is $\Marg(\Gamma_1,d^1) \times \Marg(\Gamma_2,d^2)$ 
intersected with the affine subspace $\pi_1(x) = \pi_2(y)$ for $(x,y) \in 
\Marg(\Gamma_1,d^1) \times \Marg(\Gamma_2,d^2)$. 
Computing the vertices of this new polytope reveals that this fiber product is $\Marg(\Gamma,d)$.

Note that $\Gamma = \Gamma_1 \cup \Gamma_2$ and that $d$ is the vertex labeling resulting from this gluing, which is well-defined since $d^1$ and $d^2$ agree on the label of $2$. This gluing is also evident in the state vectors indexing the vertices. For example, consider
\[
a^{32} = (0,0,0,0,0,1) \quad \text{and} \quad a^{21} = (0,0,1,0)
\]
where the coordinates of $a^{32}$ and $a^{21}$ 
are ordered $(11,12,21,22,31,32)$ and $(11,12,21,22)$ 
respectively. Both vectors map to $u^2$ under their marginalization maps 
so their fiber product yields a point in the fiber product: $(0,0,0,0,0,1,0,0,1,0)$.
\end{Ex}

More generally, we have the following which is our motivation for 
developing the fiber product in more detail.

\begin{Prop}
    Let $\Gamma$ be a reducible simplicial complex with decomposition
    $(\Gamma_1, S, \Gamma_2)$, and $d \in \mathbb{Z}^n_{\geq 2}$  
    Let $d^1 = (d_i)_{i \in |\Gamma_1|}$ and $d^2 = (d_i)_{i \in |\Gamma_2|}$.  
    Let $\pi_i : \Marg(\Gamma_i, d^i) \rightarrow \Marg(S, d_S) $  be the marginalization
    map onto the face $S$.
    Then, $\Marg(\Gamma, d)$ is the fiber product of $\Marg(\Gamma_1, d^1)$ and
$\Marg(\Gamma_2, d^2)$ with respect to $\pi_1$ and $\pi_2$.
\end{Prop}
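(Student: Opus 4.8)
The plan is to establish the equality $\Marg(\Gamma, d) = \Marg(\Gamma_1, d^1) \times_{\Marg(S, d_S)} \Marg(\Gamma_2, d^2)$ by showing that the two polytopes have the same vertex set, or equivalently by identifying the lattice points defining each and matching them. The key structural observation is that a facet of $\Gamma$ is, by definition of a reducible decomposition, a facet of exactly one of $\Gamma_1$ or $\Gamma_2$ (the separator $S$ is not a facet of $\Gamma$ since it arises as the intersection, but $S$ and its subsets are faces of both). This means the rows of $A_{\Gamma,d}$ partition naturally into rows coming from $\mathrm{facets}(\Gamma_1)$ and rows coming from $\mathrm{facets}(\Gamma_2)$, which is what lets the fiber-product structure emerge.

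First I would examine the vertices. A vertex of $\Marg(\Gamma,d)$ is a column $a^j$ for some state $j \in \mathcal{D} = \prod_{t \in |\Gamma|}[d_t]$. Writing $j = (j^1, j^2)$ where $j^1$ records the coordinates indexed by $|\Gamma_1|$ and $j^2$ those indexed by $|\Gamma_2|$, note that $j^1$ and $j^2$ must agree on the shared coordinates $S$ (since $|\Gamma| = |\Gamma_1| \cup |\Gamma_2|$ and $|\Gamma_1| \cap |\Gamma_2| = S$). The entries of $a^j$ indexed by facets of $\Gamma_1$ depend only on $j^1$ and coincide with the corresponding entries of the vertex $a^{j^1}$ of $\Marg(\Gamma_1, d^1)$; symmetrically for $\Gamma_2$. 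Thus $a^j$ maps under the natural coordinate splitting to the pair $(a^{j^1}, a^{j^2})$, and since $\pi_i$ is the marginalization onto $S$, both $\pi_1(a^{j^1})$ and $\pi_2(a^{j^2})$ equal the $S$-marginal determined by the shared state $j_S$. Hence $(a^{j^1}, a^{j^2})$ lies in the fiber product and is in fact a vertex of it.

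Next I would argue the reverse containment, namely that every vertex of the fiber product arises this way. The vertices of $\Marg(\Gamma_i, d^i)$ are exactly the $a^{j^i}$, so a lattice point of the fiber product of the form $(a^{j^1}, a^{j^2})$ must satisfy $\pi_1(a^{j^1}) = \pi_2(a^{j^2})$; since $\pi_i(a^{j^i})$ is the standard basis vector $u^{(j^i)_S}$ indexed by the $S$-coordinates of $j^i$, this forces $(j^1)_S = (j^2)_S$. The two partial states $j^1, j^2$ with matching $S$-coordinates glue to a unique global state $j \in \mathcal{D}$, and unwinding the construction shows $(a^{j^1}, a^{j^2})$ is the image of $a^j$. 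This gives a bijection between the columns of $A_{\Gamma,d}$ and the vertices of the fiber product that respects the affine structure, so the convex hulls agree. One must also verify that the affine map $(\pi_1, \pi_2)$ used to define the fiber product is compatible with this identification and that no spurious vertices (convex combinations that happen to be extreme) appear; this follows because the gluing map on states is a bijection onto the states of $\Gamma$.

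I expect the main obstacle to be bookkeeping rather than a deep idea: carefully setting up the coordinate identification so that the row-index set of $A_{\Gamma,d}$ is seen to split as the disjoint union of the row sets of $A_{\Gamma_1,d^1}$ and $A_{\Gamma_2,d^2}$, and confirming that the marginalization maps $\pi_i$ onto $\Marg(S, d_S)$ really do record exactly the shared $S$-coordinates. The one genuinely substantive point to check is that a vertex of the fiber product cannot be a convex combination of several pairs $(a^{j^1}, a^{j^2})$ with distinct glued states; this is where one uses that distinct global states $j$ give affinely independent columns $a^j$ (the columns of $A_{\Gamma,d}$ are $0/1$ vectors, and each is extreme in $\Marg(\Gamma,d)$), so the correspondence is vertex-to-vertex and the two polytopes coincide.
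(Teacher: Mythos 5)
The paper does not prove this proposition directly; it cites Theorem 22 of Sullivant's toric fiber product paper. Your attempt at a direct proof has the right overall shape, and the easy half is fine: each column $a^j$ of $A_{\Gamma,d}$ does split as a compatible pair $(a^{j^1},a^{j^2})$, the row indices of $A_{\Gamma,d}$ do partition according to whether a facet of $\Gamma$ lies in $\Gamma_1$ or $\Gamma_2$, and the compatible pairs are in bijection with the states in $\mathcal{D}$. This gives $\Marg(\Gamma,d)\subseteq \Marg(\Gamma_1,d^1)\times_{\Marg(S,d_S)}\Marg(\Gamma_2,d^2)$.

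The genuine gap is in the reverse inclusion. The fiber product is defined as the intersection of $\Marg(\Gamma_1,d^1)\times\Marg(\Gamma_2,d^2)$ with the affine subspace $\{\pi_1(x)=\pi_2(y)\}$, and slicing a product of polytopes by an affine subspace generally creates vertices that are \emph{not} pairs of vertices (e.g.\ $[0,1]\times_{\mathbb{R}}[0,2]$ under the identity maps is the diagonal segment with vertex $(1,1)$, and $1$ is not a vertex of $[0,2]$). Your argument only inspects points of the form $(a^{j^1},a^{j^2})$ and then asserts that no spurious vertices appear ``because the gluing map on states is a bijection''; that bijection identifies the compatible pairs with the columns of $A_{\Gamma,d}$ but says nothing about whether the sliced polytope has other extreme points. (Your parenthetical that distinct states give affinely independent columns is also false: $\Marg(\Gamma,d)$ typically has far more vertices than $\dim+1$.) The missing ingredient is the one that makes the toric fiber product work: $\Marg(S,d_S)$ is a \emph{simplex} and each $\pi_i$ sends every vertex $a^{j^i}$ to a vertex $u^{j^i_S}$ of it. Hence for any $x\in\Marg(\Gamma_1,d^1)$ with $\pi_1(x)=z=\sum_s z_s u^s$, the affine independence of the $u^s$ forces every convex representation of $x$ to put total mass $z_s$ on the vertices with $j^1_S=s$, so $x=\sum_s z_s x_s$ with $x_s\in\conv\{a^{j^1}:j^1_S=s\}$, and likewise for $y$. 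Then $(x,y)=\sum_s z_s(x_s,y_s)$ exhibits any point of the fiber product as a convex combination of compatible pairs, which is the containment you need. Without invoking the simplex structure of the codomain, the argument does not close.
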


See Theorem 22 in \cite{Sullivant2007} for a proof.

For the remainder of this subsection
let $\calp_1 \subseteq \mathbb{R}^{d_1}$, $\calp_2 \subseteq \mathbb{R}^{d_2}$, 
$\calq \subseteq \mathbb{R}^e$ be polytopes with affine maps 
$\pi_1:\mathbb{R}^{d_1} \to \mathbb{R}^e$ and $\pi_2:\mathbb{R}^{d_2} \to \mathbb{R}^e$. 

A key tool  for proving the Gorenstein property of hierarchical models under fiber product 
structures is the following result of Dinu and Vodi\v{c}ka \cite{Dinu2021}.

\begin{Th}\label{thm:dinu}
Let  $ \calp_1  $  and $ \calp_2  $ be two Gorenstein polytopes with the same index $k$. 
Let $\pi_i:  \calp_i \rightarrow \rr^e$ be projections such that $\pi_1(\calp_1) = \pi_2(\calp_2) = \Delta_e$,
the standard simplex in $\rr^e$.
Let $p_1$ and $p_2$ be the unique interior lattice points of $k\calp_1$ and $k\calp_2$ respectively.
Suppose that $\pi_1(p_1) = \pi_2(p_2)$.  Then the fiber product $\calp_1 \times_{\Delta_e} \calp_2$ is also 
Gorenstein of index $k$.
\end{Th}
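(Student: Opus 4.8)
The goal is to verify the three defining properties of a Gorenstein polytope for $\calp_1 \times_{\Delta_e} \calp_2$ at index $k$: normality, the existence of a unique interior lattice point in the $k$th dilate, and that this point sits at lattice distance $1$ from every facet. I would organize the proof around the observation that the fiber product interacts well with dilation, namely that $k(\calp_1 \times_{\Delta_e} \calp_2) = (k\calp_1) \times_{k\Delta_e} (k\calp_2)$, since scaling commutes with the affine conditions $\pi_1(x) = \pi_2(y)$ that cut out the fiber product. This lets me transfer statements about interior lattice points of the $k$th dilates of $\calp_1$ and $\calp_2$ directly to the fiber product.

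First I would establish normality. Since $\calp_1$ and $\calp_2$ are Gorenstein, they are normal by definition, and the projections onto $\Delta_e$ correspond to a toric fiber product situation. I would invoke the known normality results for fiber products of normal polytopes whose projections are onto a simplex — this is precisely the setting covered by the toric fiber product machinery of \cite{Sullivant2007}, where the compatibility of the gradings with a simplex codomain guarantees that normality is inherited. The crucial structural fact is that $\pi_1(\calp_1) = \pi_2(\calp_2) = \Delta_e$, so the fiber over each vertex of $\Delta_e$ decomposes the vertex sets of $\calp_1$ and $\calp_2$ into blocks, and the fiber product vertices are exactly the compatible gluings within each block.

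Next I would pin down the interior lattice point. Set $p = (p_1, p_2)$, where $p_1, p_2$ are the unique interior lattice points of $k\calp_1$ and $k\calp_2$. The hypothesis $\pi_1(p_1) = \pi_2(p_2)$ guarantees that $p$ satisfies the fiber product equations, so $p \in k(\calp_1 \times_{\Delta_e} \calp_2)$, and it is a lattice point. To see $p$ is \emph{interior}, I would argue that a point of the fiber product lies in the relative interior if and only if both its coordinate projections onto $\calp_1$ and $\calp_2$ lie in the respective relative interiors; since $p_1, p_2$ are interior by hypothesis, $p$ is too. For uniqueness, any interior lattice point $q = (q_1, q_2)$ of the $k$th dilate must project to interior lattice points $q_1 \in k\calp_1$ and $q_2 \in k\calp_2$, forcing $q_1 = p_1$ and $q_2 = p_2$ by the uniqueness assumption on $\calp_1$ and $\calp_2$.

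The main obstacle is the lattice-distance-one condition on the facets, because the facets of a fiber product are \emph{not} simply the facets of $\calp_1$ and $\calp_2$ pulled back — new facets can arise from the intersection with the fiber-product subspace. I would handle this by analyzing the facet structure: facets of $\calp_1 \times_{\Delta_e} \calp_2$ come either from facets of $\calp_1$ (pulled back along the first projection), facets of $\calp_2$ (pulled back along the second projection), or possibly from inequalities induced by the gluing. For facets inherited from $\calp_i$, the lattice distance from $p$ is inherited directly from the lattice distance of $p_i$ to the corresponding facet of $k\calp_i$, which is $1$ by the Gorenstein hypothesis, and I would verify that the primitivity of the facet normal is preserved under the pullback. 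The delicate point, and where I expect to spend the most effort, is ruling out or controlling the ``new'' facets created by the fiber product condition; here the fact that the common image is the \emph{standard simplex} $\Delta_e$ — combined with the matching condition $\pi_1(p_1) = \pi_2(p_2)$ — should force any such facet to also be at lattice distance exactly $1$, and this is essentially the content one must extract from the Dinu--Vodi\v{c}ka analysis rather than reprove from scratch.
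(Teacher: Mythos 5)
The paper does not actually prove this statement: Theorem \ref{thm:dinu} is quoted verbatim as a result of Dinu and Vodi\v{c}ka \cite{Dinu2021}, so there is no in-paper argument to compare yours against, and your proposal has to stand on its own. The parts you do carry out are fine. Writing $\calp_1\times_{\Delta_e}\calp_2=(\calp_1\times\calp_2)\cap L$ with $L$ the affine subspace $\pi_1(x)=\pi_2(y)$, the point $(p_1,p_2)$ lies on $L$ by the hypothesis $\pi_1(p_1)=\pi_2(p_2)$ and in $\mathrm{relint}(k\calp_1\times k\calp_2)$, so the identity $\mathrm{relint}(P\cap L)=\mathrm{relint}(P)\cap L$ (valid once $L$ meets $\mathrm{relint}(P)$, which $(p_1,p_2)$ itself witnesses) gives both interiority and uniqueness exactly as you say. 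The normality step is plausible but only gestured at: ``invoke the TFP machinery'' is not an argument, and you would need to actually cite or prove that the fiber product of normal polytopes over the standard simplex (the linearly independent multigrading case) is normal.

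The genuine gap is the facet analysis, which you correctly identify as the main obstacle and then do not do. Two concrete problems sit there. First, facets of $(\calp_1\times\calp_2)\cap L$ need not be pullbacks of facets of $\calp_1$ or $\calp_2$: a face of $\calp_1\times\calp_2$ of codimension greater than one can meet $L$ in a facet of the fiber product (already for a square cut by a diagonal line, the facets of the segment are vertices of the square), so your trichotomy of facet types does not reduce the ``new'' facets to anything you control. Second, even for facets that are pullbacks, lattice distance is measured with respect to the induced lattice $\mathbb{Z}^{d_1+d_2}\cap L$, and the primitive normal of $F\cap L$ in that lattice need not be the restriction of the primitive normal of $F$; your parenthetical ``verify that primitivity is preserved'' is precisely the thing that has to be proved, and it is where the hypotheses $\pi_i(\calp_i)=\Delta_e$ and $\pi_1(p_1)=\pi_2(p_2)$ must enter. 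Your conclusion that this ``is essentially the content one must extract from the Dinu--Vodi\v{c}ka analysis rather than reprove from scratch'' concedes that the core of the theorem is being assumed rather than proved; as a proof of the theorem itself, that is circular. (For what it is worth, Dinu and Vodi\v{c}ka avoid this polyhedral delicacy by working algebraically with the semigroup rings and their canonical modules, which is a genuinely different route from the facet-by-facet analysis you sketch.)
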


We can now consider the application of Theorem \ref{thm:dinu}
for hierarchical models.   Note that, if $\Marg(\Gamma,  d)$ is Gorenstein, then,
as discussed previously, the special interior lattice point in the interior of the appropriate dilate
is the vector ${\bf 1}$.  Suppose that $\sigma$ is a face of $\Gamma$ and let $\tau$ be any facet of $\Gamma$ that contains
$\sigma$.
Let $\pi:  \Marg(\Gamma, {\bf d}) \rightarrow \rr^\sigma$ be the marginal projection map.
Then $\pi({\bf 1})  =  k {\bf 1}$ where $k =  \prod_{i \in \tau \setminus \sigma} d_i$ (note that $k$ is independent of chosen $\tau$ by Proposition~\ref{prop:sameWeight}).  
This observation immediately implies the following about the preservation of the Gorenstein
property for reducible hierarchical models.

\begin{Th}\label{thm:reducible}
    Suppose that $\Gamma$ is a reducible simplicial complex with decomposition $(\Gamma_1, S, \Gamma_2)$ 
    and $d \in \zz_{\geq 2}^n$ such that all
    facets have the same weight.
    Then  $\Marg(\Gamma_1, d^1)$ and $\Marg(\Gamma_2, d^2)$ are Gorenstein if and only if
     $\Marg(\Gamma, d)$ is Gorenstein.
\end{Th}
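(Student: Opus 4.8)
The plan is to identify $\Marg(\Gamma, d)$ with the fiber product $\Marg(\Gamma_1, d^1) \times_{\Marg(S, d_S)} \Marg(\Gamma_2, d^2)$ via the preceding Proposition (Theorem 22 of \cite{Sullivant2007}), and then read off the Gorenstein property in both directions using Theorem \ref{thm:dinu}. Two preliminary facts set everything up. First, since $S$ is a simplex, $\Marg(S, d_S)$ is a standard simplex $\Delta_e$ (on $\prod_{i \in S} d_i$ vertices) and each marginalization map $\pi_i$ is surjective onto it, matching the hypothesis of Theorem \ref{thm:dinu}. Second, the equal-weight hypothesis on $\Gamma$ forces every facet of $\Gamma_1$ and of $\Gamma_2$ to have the common weight $\omega := \omega(\Gamma, d)$: a facet $\tau$ of $\Gamma_i$ with $\tau \not\subseteq S$ is also a facet of $\Gamma$ (a face of $\Gamma$ properly containing $\tau$ would lie in $\Gamma_1 \cap \Gamma_2 = 2^S$, forcing $\tau \subseteq S$), hence has weight $\omega$; the only facet of $\Gamma_i$ contained in $S$ could be $S$ itself, which lands us in the degenerate case $\Gamma_i = 2^S$ whose marginal polytope is a standard simplex, trivially Gorenstein. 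Consequently, by Lemma \ref{lemma:codegreeMarg}, whenever $\Marg(\Gamma_i, d^i)$ is Gorenstein its index is $\omega$ and its unique interior lattice point (at dilate $\omega$) is $\mathbf 1$; the same holds for $\Marg(\Gamma, d)$.

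For the direction that Gorenstein factors yield a Gorenstein product, I would apply Theorem \ref{thm:dinu} directly. The two factors are Gorenstein of the same index $\omega$; the targets of $\pi_1, \pi_2$ are the standard simplex; and the interior points match, since by the computation preceding this theorem $\pi_i(\mathbf 1) = (\omega/\mathrm{wt}(S))\,\mathbf 1$ for $i = 1, 2$, so $\pi_1(p_1) = \pi_2(p_2)$. Theorem \ref{thm:dinu} then gives that $\Marg(\Gamma, d)$ is Gorenstein of index $\omega$.

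The converse---that a Gorenstein $\Marg(\Gamma, d)$ forces each $\Marg(\Gamma_i, d^i)$ to be Gorenstein---requires checking the three defining conditions for the factors, and this is where the work lies. First I would realize $\Marg(\Gamma_i, d^i)$ as a face of $\Marg(\Gamma, d)$: fixing every variable in the opposite part outside $S$ to a single state cuts out a face (the relevant coordinate-sum over the opposite facets is maximized exactly on those vertices), and this face is affinely isomorphic to $\Marg(\Gamma_i, d^i)$. Normality of $\Marg(\Gamma_i, d^i)$ then follows because a face of a normal polytope is normal. The unique-interior-point condition is already supplied by Lemma \ref{lemma:codegreeMarg}, using the equal facet weights established above, with interior point $\mathbf 1$ at dilate $\omega$.

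The remaining and main obstacle is the lattice-distance-one condition for the facets of $\Marg(\Gamma_i, d^i)$. For the coordinate facets $x_{(\tau, i_\tau)} \ge 0$ this is immediate, since $\mathbf 1$ has the coordinate value $1$; the difficulty is the facets of $\Marg(\Gamma_i, d^i)$ that are not of this form, whose primitive supporting inequalities are not visibly inherited from facets of $\Marg(\Gamma, d)$. I would pull each such facet normal back along the coordinate projection $\rho_i : \Marg(\Gamma, d) \to \Marg(\Gamma_i, d^i)$, which sends $\mathbf 1$ to $\mathbf 1$, and compare lattice distances; the delicate point is that primitivity of the facet normal and the value of the lattice distance are preserved under this projection and its section. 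If a direct facet-by-facet comparison proves unwieldy, the fallback is to use the characterization of the Gorenstein property (for normal polytopes) by symmetry of the $h^*$-vector, together with the multiplicativity of the Hilbert series of the associated semigroup algebras under the toric fiber product of \cite{Sullivant2007}: symmetry of $h^*$ for the product over the standard simplex forces symmetry of $h^*$ for each factor, yielding the Gorenstein property of the factors once their normality is known.
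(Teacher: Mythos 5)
Your forward direction is essentially the paper's: realize $\Marg(\Gamma,d)$ as the fiber product of $\Marg(\Gamma_1,d^1)$ and $\Marg(\Gamma_2,d^2)$ over the standard simplex $\Marg(S,d_S)$ and apply Theorem \ref{thm:dinu}, after checking that both factors are Gorenstein of the common index $\omega(\Gamma,d)$ with interior point $\mathbf{1}$, so that $\pi_1(p_1)=\pi_2(p_2)$. That half is fine, and is in fact more carefully justified than in the paper. The genuine gap is in the converse, and you flag it yourself: after obtaining normality of the factors (via the face embedding) and uniqueness of the interior point $\mathbf{1}$ (via Lemma \ref{lemma:codegreeMarg}), you never verify the lattice-distance-one condition for the facets of $\Marg(\Gamma_i,d^i)$ that are not coordinate hyperplanes, and neither proposed repair closes this. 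Pulling a facet inequality of $\Marg(\Gamma_1,d^1)$ back along the coordinate projection gives a valid inequality on $\Marg(\Gamma,d)$, but its zero set need not be a facet of $\Marg(\Gamma,d)$ (the codimension can jump when the facet of the factor does not surject onto $\Delta_e$), so the Gorenstein hypothesis on $\Marg(\Gamma,d)$ says nothing directly about that inequality. The fallback rests on a false premise: the coarse Hilbert series of a toric fiber product is not the product of the Hilbert series of the factors --- the ring is a Segre-type product $\bigoplus_a (R_1)_a\otimes(R_2)_a$ with respect to the fine grading --- so symmetry of $h^*$ does not descend from the product to the factors by that route. (For comparison, the paper's own converse is also only a sketch; it argues from the rigidity of the hierarchical setting, namely that the interior point is forced to be $\mathbf{1}$ and the index is forced to be $\omega(\Gamma,d)$, rather than from either of your mechanisms.)

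A second concrete problem is the step ``every facet of $\Gamma_i$ has weight $\omega(\Gamma,d)$'': a facet of $\Gamma_i$ contained in $S$ must equal $S$, but that does not force $\Gamma_i=2^S$. For instance $\Gamma=[12][34]$ with $d=\mathbf{2}$ admits the reducible decomposition $\Gamma_1=[12][3]$, $S=\{3\}$, $\Gamma_2=[34]$, which satisfies the paper's definition; here $\Gamma_1$ has facets of weights $4$ and $2$, so $\Marg(\Gamma_1,d^1)$ is not Gorenstein by Proposition \ref{prop:sameWeight}, even though $\Marg(\Gamma,d)\cong\Delta_3\times\Delta_3$ is Gorenstein. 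So the equal-weight claim for the factors fails for such decompositions; this is a boundary case that the theorem's statement itself does not exclude, and any complete proof must either rule out $S$ being a facet of some $\Gamma_i$ or handle that case separately.
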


\begin{proof}
   The forward direction immediately follows from Theorem \ref{thm:dinu}.  
   Note that generally, it is not true that if $\calp_1 \times_{\Delta_e} \calp_2$ then
   $\calp_1$ and $\calp_2$ are Gorenstein.   Furthermore, if they are Gorenstein, they do not all need to
   have the same index.  However, in the case of hierarchical models, since the special interior point
   is always ${\bf 1}$, and the index always needs to be the weight of the complex, this implies the converse.
\end{proof}

An immediate Corollary of Theorem \ref{thm:reducible} is the classification of decomposable
Gorenstein hierarchical models.

\begin{Cor}
    Let $\Gamma$ be a decomposable simplicial complex and $d \in \zz_{\geq 2}^n$ .
    Then $\Marg(\Gamma, d)$ is Gorenstein if and only if all facets have the same weight.
\end{Cor}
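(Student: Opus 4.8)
The plan is to prove the two directions separately. The forward direction follows immediately from an earlier result, while the backward direction is an induction on the decomposition tree of $\Gamma$ whose inductive step is a direct appeal to Theorem \ref{thm:reducible}. For the forward direction, suppose $\Marg(\Gamma, d)$ is Gorenstein; then Proposition \ref{prop:sameWeight} instantly gives that all facets of $\Gamma$ have the same weight, and nothing about decomposability is even needed here.

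For the backward direction, assume all facets of $\Gamma$ have the same weight $\omega(\Gamma, d)$ and induct on the number of faces of $\Gamma$. In the base case $\Gamma = 2^S$ is a simplex; then the design matrix $A_{2^S, d}$ is a permutation of the identity, so $\Marg(2^S, d)$ is the standard simplex $\Delta_{N-1}$ with $N = \prod_{t \in S} d_t$. The standard simplex is unimodular, hence normal, and its first dilate with an interior lattice point is $N \Delta_{N-1}$, whose unique interior lattice point ${\bf 1}$ lies at lattice distance one from each facet $x_i \geq 0$; thus $\Delta_{N-1}$ is Gorenstein of index $N$. The equal-weight hypothesis is vacuous here, as $2^S$ has a single facet.

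For the inductive step, take a reducible decomposition $(\Gamma_1, S, \Gamma_2)$ with $\Gamma_1$ and $\Gamma_2$ decomposable proper subcomplexes, so each has strictly fewer faces than $\Gamma$. The key combinatorial fact is that $\text{facets}(\Gamma) = \text{facets}(\Gamma_1) \cup \text{facets}(\Gamma_2)$: every facet of $\Gamma$ lies in $\Gamma_1$ or $\Gamma_2$ and is maximal there, and conversely a facet $\tau$ of $\Gamma_i$ with $\tau \not\subseteq S$ stays maximal in $\Gamma$. Hence every facet of $\Gamma_i$ has weight $\omega(\Gamma, d)$, so $\Gamma_1$ and $\Gamma_2$ each have all facets of equal weight. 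By the inductive hypothesis, $\Marg(\Gamma_1, d^1)$ and $\Marg(\Gamma_2, d^2)$ are Gorenstein, and Theorem \ref{thm:reducible} then yields that $\Marg(\Gamma, d)$ is Gorenstein, closing the induction.

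I expect the only genuine obstacle to be verifying that the equal-weight condition descends to $\Gamma_1$ and $\Gamma_2$, which rests on the identity $\text{facets}(\Gamma) = \text{facets}(\Gamma_1) \cup \text{facets}(\Gamma_2)$ together with ruling out the degenerate possibility that $S$ itself is a facet of some $\Gamma_i$. The latter cannot occur under an equal-weight hypothesis with a proper decomposition, since a facet $S$ would force $\prod_{k \in S} d_k = \omega(\Gamma, d)$, impossible because $d_k \geq 2$ and $S$ is a proper subset of some facet of $\Gamma$. Everything else is routine bookkeeping and a direct application of Theorem \ref{thm:reducible}, which is why the statement is only a corollary.
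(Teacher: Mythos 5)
Your overall strategy is the same as the paper's: necessity via Proposition \ref{prop:sameWeight}, sufficiency by induction with the simplex as base case and Theorem \ref{thm:reducible} driving the inductive step. The one place where you try to be more careful than the paper is exactly where the argument breaks: the claim that for an \emph{arbitrary} reducible decomposition $(\Gamma_1,S,\Gamma_2)$ into decomposable pieces one has $\mathrm{facets}(\Gamma)=\mathrm{facets}(\Gamma_1)\cup\mathrm{facets}(\Gamma_2)$, and in particular that $S$ cannot be a facet of some $\Gamma_i$. This is false. Take $\Gamma=[12][34]$ with $d=(2,2,2,2)$ and the reducible decomposition $\Gamma_1=[12][3]$, $S=\{3\}$, $\Gamma_2=[34]$: both pieces are decomposable proper subcomplexes, $\Gamma_1\cap\Gamma_2=2^{\{3\}}$, and all facets of $\Gamma$ have weight $4$, yet $S=\{3\}$ is a facet of $\Gamma_1$ and $\Gamma_1$ has facets of weights $4$ and $2$. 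So $\Marg(\Gamma_1,d^1)$ is not Gorenstein by Proposition \ref{prop:sameWeight}, the inductive hypothesis gives you nothing, and the inductive step collapses for this choice of decomposition. Your argument ruling this out is circular: the identity $\prod_{k\in S}d_k=\omega(\Gamma,d)$ would follow from the equal-weight condition on $\Gamma_1$, which is precisely what you are trying to establish; the hypothesis of the corollary only constrains the facets of $\Gamma$. What you have actually shown is that if $S$ is a facet of some $\Gamma_i$ then the induction fails, not that this situation cannot occur.

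The gap is repairable, and the repair is what the paper silently assumes when it asserts that ``$\Gamma$ has a reducible decomposition where the two constituent complexes are also decomposable [and] have the same weight'': you must \emph{choose} the decomposition rather than take an arbitrary one. A decomposable complex with at least two facets admits an ordering $F_1,\dots,F_m$ of its facets with the running intersection property; setting $\Gamma_1=2^{F_m}$, $\Gamma_2=\langle F_1,\dots,F_{m-1}\rangle$ and $S=F_m\cap(F_1\cup\dots\cup F_{m-1})$ gives a reducible decomposition in which $S$ is properly contained in $F_m$ and in some $F_j$ with $j<m$, so every facet of $\Gamma_1$ and of $\Gamma_2$ is a facet of $\Gamma$, the equal-weight condition descends, both pieces are decomposable, and your induction closes as written. (For the example above this means decomposing $[12][34]$ as $([12],\emptyset,[34])$.) Your instinct that the descent of the equal-weight condition is the only genuine obstacle was exactly right --- the paper elides it entirely --- but the justification you supplied for it does not work and needs to be replaced by a careful choice of decomposition.
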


\begin{proof}
    We already know that all facets having the same weight is a necessary condition for being Gorenstein.
    We can use induction on the number of facets and Theorem \ref{thm:reducible} to deduce the
    result.  Note in particular, that if $\Gamma$ just has a single facet $[n]$, then $\Marg(\Gamma, d)$
    is a standard simplex and is Gorenstein of index the weight of $\Gamma$.  Then, if $\Gamma$ is decomposable
    and all facets have the same weight, $\Gamma$ has a reducible decomposition where the two constituent complexes
    $\Gamma_1$ and $\Gamma_2$ are also decomposable, have the same weight, and hence are Gorenstein by induction.  Applying Theorem
    \ref{thm:reducible} shows that $\Marg(\Gamma, d)$ is also Gorenstein.
\end{proof}

Using the results we have so far about properties of the simplicial complexes that preserve the Gorenstein
property, we can make the following conjecture for binary hierarchical models, that
generalizes Theorem \ref{thm:ohsugicor}  for graphs.

Let $\partial \Delta_n$ denote the boundary of an $n-1$ simplex, the simplicial complex on $[n]$ which has all faces
but $[n]$.

\begin{Conj}\label{conj:genohsugi}
    Let $\Gamma$ be a pure simplicial complex, whose facets all have $n$ elements.  Suppose that $\Marg(\Gamma, {\bf 2})$ is Gorenstein.  
    Then either $\Gamma$ is reducible or $\Gamma$ is one of the following complexes:  $\Delta_n$ or $C^i (\partial \Delta_{n+1 -i})$ for $i = 0, 1, \ldots, n-2$.
\end{Conj}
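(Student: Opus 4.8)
The plan is to reduce the conjecture, via the two structural results already in hand, to a single statement about ``irreducible cores,'' and then to attack that core statement through the lattice-distance condition defining the Gorenstein property. By Proposition~\ref{prop:sameWeight} and its corollary, the Gorenstein hypothesis already forces $\Gamma$ to be pure with all facets of a common weight and with special interior point ${\bf 1}$, so throughout I may assume $\Gamma$ is pure with facets of size $n$. Since the conjecture's first alternative absorbs all reducible complexes, I only need to classify the irreducible ones. The key preliminary observation is that the coning proposition proved above (that $\Marg(\Gamma,d)$ is Gorenstein if and only if $\Marg(C(\Gamma),d')$ is) can be iterated, and that coning also preserves irreducibility: if $\Gamma=\Gamma_1\cup\Gamma_2$ with $\Gamma_1\cap\Gamma_2=2^S$ then $C(\Gamma_1)\cap C(\Gamma_2)=2^{S\cup\{n+1\}}$, so $C(\Gamma)$ is reducible exactly when $\Gamma$ is. Calling a vertex that lies in every facet a \emph{cone vertex}, and noting that $\Gamma$ has a cone vertex $v$ precisely when $\Gamma=C(\Gamma')$ for the complex $\Gamma'$ obtained by deleting $v$, I would peel off cone vertices one at a time until reaching an irreducible Gorenstein ``core'' with no cone vertex and facets of some size $k\le n$.

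This reduces the entire conjecture to the following Core Claim: an irreducible pure binary complex of facet size $k$ with no cone vertex and with $\Marg(\Gamma,{\bf 2})$ Gorenstein is either a single vertex or $\partial\Delta_{k+1}$. Granting the Core Claim, peeling back up recovers $\Delta_n$ from the single-vertex core and $C^{\,n-k}(\partial\Delta_{k+1})=C^{\,i}(\partial\Delta_{n+1-i})$ with $i=n-k$ from the boundary core; the range $0\le i\le n-2$ comes from $2\le k\le n$, matching the stated list. Inside the Core Claim, the ``full boundary'' conclusion reduces to a vertex count: once one knows $|\Gamma|=k+1$, the facets are $k$-subsets of a $(k+1)$-set, i.e. complements of single vertices, and a \emph{missing} facet $[k+1]\setminus\{v\}$ would leave $v$ in every remaining facet, a cone vertex---contradiction. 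Hence all $k$-subsets are faces and $\Gamma=\partial\Delta_{k+1}$. So the whole problem collapses to proving that a no-cone-vertex irreducible Gorenstein core cannot have $k+2$ or more vertices.

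To prove that vertex bound is where the real work lies. For $k=2$ it is exactly Theorem~\ref{thm:ohsugicor} read on graphs: the only irreducible, cone-vertex-free, Gorenstein graph is the triangle $C_3=\partial\Delta_3$, so the Core Claim holds for $k=2$ and serves as the base case. For $k\ge 3$ I would argue that having $\ge k+2$ vertices while remaining cone-vertex-free and irreducible forces a violation of the inequality-by-inequality Gorenstein condition: either some facet-defining inequality $ax\le b$ of $\Marg(\Gamma,{\bf 2})$ satisfies $a{\bf 1}\le b-2$ (so ${\bf 1}$ is lattice distance $\ge 2$ from that facet), or $\Marg(\Gamma,{\bf 2})$ fails to be normal (it has a hole), both of which preclude the Gorenstein property. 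Because the non-trivial facets of marginal polytopes are not explicitly available, I expect the workable route is the algebraic reformulation: a normal lattice polytope is Gorenstein exactly when its $h^{*}$-polynomial is palindromic. I would try to read the symmetry defect of the $h^{*}$-polynomial directly off the combinatorics of $\Gamma$---for instance through a description of the canonical module or interior lattice-point generating function---and show this symmetry can hold only when $|\Gamma|=k+1$. A parallel, more combinatorial route would imitate Ohsugi's excluded-minor analysis of Gorenstein cut polytopes and isolate a ``forbidden configuration'' (the higher analogue of a long induced cycle or a $K_4$ minor) that appears as soon as the core has an extra vertex.

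The main obstacle is precisely the absence of a facet description of $\Marg(\Gamma,{\bf 2})$ in the core case for $k\ge 3$: as noted in the paper, these polytopes subsume the cut polytopes and a complete inequality description is likely hopeless, so the lattice-distance-one condition cannot be verified facet by facet. I therefore expect the decisive step to be establishing the vertex bound $|\Gamma|=k+1$ by an indirect, model-independent criterion---palindromicity of $h^{*}$ or a forbidden-configuration obstruction---rather than by direct polyhedral computation. A secondary difficulty that must be folded in is normality itself: the Gorenstein definition requires it, yet normality of binary hierarchical models is classified only for graphs (via $K_4$ minors), so the argument must simultaneously certify normality of $\partial\Delta_{k+1}$ and its cones and rule out the non-normal cores.
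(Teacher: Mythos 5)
This statement is Conjecture~\ref{conj:genohsugi}: the paper does not prove it, and offers only computational verification for complexes on at most $6$ vertices together with the observation that every complex on the list is in fact Gorenstein. Your proposal likewise does not prove it. The reduction you set up is sensible and mostly sound: coning preserves the Gorenstein property by the proposition in the paper, a vertex lying in every facet does exhibit $\Gamma$ as a cone over its deletion, and the ``missing facet gives a cone vertex'' argument correctly shows that a cone-free pure complex of facet size $k$ on exactly $k+1$ vertices must be all of $\partial\Delta_{k+1}$. (One step you should not wave at: that reducibility of $C(\Gamma)$ forces reducibility of $\Gamma$ requires showing the apex must lie in the separator of any reducible decomposition of $C(\Gamma)$; this is true because the apex lies in every facet and hence in both pieces, but it needs to be said.)

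The genuine gap is exactly where you place it, and it is the entire content of the conjecture: you give no proof that an irreducible, cone-vertex-free, Gorenstein binary complex of facet size $k\ge 3$ has at most $k+1$ vertices. The two routes you name --- palindromicity of the $h^{*}$-polynomial, or an Ohsugi-style forbidden-minor analysis --- are proposals, not arguments; neither is carried out, and for the first you would additionally have to certify normality of the core (the Gorenstein definition in this paper presupposes it, and normality of binary hierarchical models is classified only for graphs). So what you have is a clean reduction of Conjecture~\ref{conj:genohsugi} to a ``Core Claim'' plus a verified base case $k=2$ via Theorem~\ref{thm:ohsugicor}, which is a useful reformulation but not a proof; the status of the statement remains conjectural, consistent with how the paper presents it.
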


Another way to say this classes of complexes in Conjecture \ref{conj:genohsugi} is that they are the ones
that are built from reducible decompositions starting with the base complexes on the list 
$\Delta_n$ or $C^i (\partial \Delta_{n+1 -i})$ for $i = 0, 1, \ldots, n-2$.
Note that Conjecture \ref{conj:genohsugi}, includes the case of graphs as a special case.  In the case of graphs, it says
that every Gorenstein binary marginal polytope where $\Gamma$ is a graph can be built from gluing edges and triangles in 
reducible decompositions.  Conjecture \ref{conj:genohsugi}, generalizes this to arbitrarily dimensions, with a
longer list of base complexes obtained by the cone operations.

Note that all of the complexes $\Gamma$ that are constructed via Conjecture \ref{conj:genohsugi}  are Gorenstein, provided
that we have the following result.

\begin{Prop}
    The marginal polytope $\Marg(\partial\Delta_n, {\bf 2})$ is Gorenstein. 
\end{Prop}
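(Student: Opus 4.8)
The plan is to exploit the fact that for $\Gamma = \partial\Delta_n$ the design matrix $A := A_{\partial\Delta_n,{\bf 2}}$ has a one-dimensional kernel, which makes its toric ideal principal and lets me read off the Gorenstein property from the fact that a hypersurface ring is a complete intersection. First I would record the candidate data: since $\partial\Delta_n$ is pure with every facet of weight $2^{n-1}$, Lemma~\ref{lemma:codegreeMarg} gives that $\mathrm{codeg}(\Marg(\partial\Delta_n,{\bf 2})) = 2^{n-1}$ and that ${\bf 1}$ is the unique interior lattice point of $2^{n-1}\Marg(\partial\Delta_n,{\bf 2})$. So the only remaining tasks are to establish normality and the lattice-distance-one condition, both of which I will obtain at once from the ring structure.

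Next I would compute the kernel of $A$. The faces of $\partial\Delta_n$ are all $2^n - 1$ proper subsets of $[n]$, so the linearly independent set $B_{\partial\Delta_n,{\bf 2}}$ introduced earlier (which has one column per face) shows $\mathrm{rank}(A) = 2^n - 1$. As $A$ has $2^n$ columns, $\ker_{\zz} A$ is one-dimensional. A generator is the \emph{parity circuit} $u$, defined on the column indexed by $j \in \{1,2\}^n$ by $u_j = \prod_{i=1}^n (-1)^{j_i}$; one checks $u \in \ker A$ because, for each facet $\sigma$ and state $i_\sigma$, the only two columns $j$ with $j_\sigma = i_\sigma$ differ solely in the single coordinate outside $\sigma$ and hence carry opposite signs. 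Crucially, every entry of $u$ lies in $\{+1,-1\}$, and its positive and negative supports each have $2^{n-1}$ elements. Consequently the toric ideal $I_A$ is principal, generated by the single binomial $p^{u^+} - p^{u^-}$, which is homogeneous of degree $2^{n-1}$ in the grading $\deg p_j = 1$.

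The main work is normality, and this is exactly where the $\{+1,-1\}$ shape of $u$ is essential (and where models such as $K_4$, whose circuits are longer, fail). Given $b \in \cone(A) \cap \zz A$, I would choose an integer preimage $c$ with $Ac = b$ together with a nonnegative real preimage; their difference is a real multiple of $u$, so the set of $t$ with $c - tu \ge 0$ is the interval cut out by $t \le c_j$ for $u_j = +1$ and $t \ge -c_j$ for $u_j = -1$. This interval is nonempty and has integer endpoints, so it contains an integer $t$, giving $c - tu \in \mathbb{N}^{2^n}$ with $A(c-tu) = b$; hence $A$ is normal. With normality in hand, the affine semigroup ring $k[A]$ equals the Ehrhart ring of $\Marg(\partial\Delta_n,{\bf 2})$, and by the previous paragraph it is a hypersurface $k[\{p_j\}]/(p^{u^+}-p^{u^-})$, hence a complete intersection and therefore Gorenstein as a graded ring. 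By the standard dictionary between normal lattice polytopes and their Ehrhart rings, $\Marg(\partial\Delta_n,{\bf 2})$ is a Gorenstein polytope; its $a$-invariant $2^{n-1} - 2^n = -2^{n-1}$ recovers the index $2^{n-1}$, in agreement with Lemma~\ref{lemma:codegreeMarg}. The one point I would verify carefully is the passage from ``$k[A]$ Gorenstein'' to ``$\Marg(\partial\Delta_n,{\bf 2})$ Gorenstein'' in the sense of the definition, namely that $\zz A$ is saturated so that the ambient and intrinsic lattices agree; this is routine for these $0/1$ configurations but should be checked.
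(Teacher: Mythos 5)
Your proof is correct and follows essentially the same route as the paper's: both arguments reduce the claim to normality together with the observation that $A_{\partial\Delta_n,\mathbf{2}}$ has corank one, so that the associated ring is a hypersurface, hence a complete intersection, hence Gorenstein. The only real difference is that where the paper cites unimodularity from \cite{Bernstein2017} to obtain normality, you prove it directly from the $\pm 1$ parity circuit spanning the kernel; that argument is sound and self-contained, and the saturation caveat you flag at the end is a point the paper's own two-line proof also leaves implicit.
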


\begin{proof}
    Note that $\Marg(\partial\Delta_n, {\bf 2})$ is unimodular \cite{Bernstein2017}, hence normal.  But $\Marg(\partial\Delta_n, {\bf 2})$
    has codimension $1$, so it must be Gorenstein.
\end{proof}

Thus we know that all the complexes from Conjecture \ref{conj:genohsugi} are Gorenstein, so proving the conjecture amounts to showing that
these are the only such complexes.  What is likely needed to make progress on that conjecture 
are new families of facet defining inequalities for hierarchical models that can be used to verify that
a marginal polytope is not Gorenstein.

As evidence towards Conjecture \ref{conj:genohsugi}, we can verify the Gorenstein property computationally in complexes with
small numbers of vertices. 

\begin{Prop}
    Conjecture \ref{conj:genohsugi} is true for all complexes on $6$ or fewer vertices.
\end{Prop}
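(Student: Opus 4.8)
The plan is to reduce the statement to a finite, isomorphism-invariant search and then certify it by computer. First note that the ``if'' direction is already in hand: the complexes $\Delta_n$ and $C^i(\partial\Delta_{n+1-i})$ are Gorenstein by the preceding Proposition on $\Marg(\partial\Delta_n, {\bf 2})$ together with the cone Proposition, and any complex obtained from Gorenstein pieces by a reducible decomposition is Gorenstein by Theorem \ref{thm:reducible}. So it suffices to verify the ``only if'' direction: that every \emph{irreducible} pure complex $\Gamma$ on at most $6$ vertices for which $\Marg(\Gamma, {\bf 2})$ is Gorenstein is isomorphic to one of $\Delta_n$ or $C^i(\partial\Delta_{n+1-i})$. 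By the Corollary to Proposition \ref{prop:sameWeight} we may restrict attention to pure complexes, and since both the Gorenstein property and the target list are invariant under relabeling of vertices, I would enumerate pure complexes on $k \le 6$ vertices up to isomorphism, organized by the common facet size $n$.

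For each such $\Gamma$ I would build the design matrix $A_{\Gamma, {\bf 2}}$, form $\Marg(\Gamma, {\bf 2})$, and test the Gorenstein property directly from the Definition: check that the polytope is normal, that the dilate at the predicted index $\omega(\Gamma, {\bf 2}) = 2^n$ has a unique interior lattice point (necessarily ${\bf 1}$), and that ${\bf 1}$ is lattice distance one from every facet. In practice this is carried out with a package such as Normaliz or polymake, which compute normality and the Gorenstein/interior-point data of a lattice polytope directly. The normality computation is the expensive step, so restricting in advance to pure complexes, and discarding reducible ones (whose Gorenstein status is already settled by Theorem \ref{thm:reducible}), substantially prunes the search.

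Finally, for each irreducible pure complex flagged as Gorenstein, I would confirm that it is isomorphic to a member of the list $\Delta_n$, $C^i(\partial\Delta_{n+1-i})$; recognizing these is routine, as they are pinned down by their $f$-vector together with their cone structure. The main obstacle is the combinatorial explosion: the number of pure simplicial complexes on $6$ vertices, even up to isomorphism, is large, and certifying normality of the associated marginal polytopes is the computational bottleneck. Care is also needed to ensure the enumeration is genuinely exhaustive up to isomorphism and that reducibility is tested correctly, searching over all candidate separators $S$ and checking $\Gamma = \Gamma_1 \cup \Gamma_2$ with $\Gamma_1 \cap \Gamma_2 = 2^S$ and both factors proper, so that no irreducible Gorenstein complex outside the list is overlooked.
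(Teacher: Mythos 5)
Your proposal is correct in outline and is, at bottom, the same kind of argument the paper gives: a finite, computer-assisted case check over pure complexes on at most $6$ vertices, with reducible complexes discarded via Theorem \ref{thm:reducible} and the graph case handled by Theorem \ref{thm:ohsugicor}. The one substantive difference is how the search is made tractable. You propose to enumerate all pure complexes up to isomorphism and test normality of each marginal polytope on the fly, and you correctly flag normality as the bottleneck; in ambient dimension up to $2^6$ this brute-force route is where a naive implementation is most likely to stall. The paper instead exploits the implication ``Gorenstein $\Rightarrow$ normal'' together with the pre-existing classification of normal binary hierarchical models on up to $6$ vertices in \cite{Bernstein2017}: after removing reducible complexes, cones over smaller normal complexes, and the complexes of the form $C^i(\partial\Delta_{6-i})$, only three irreducible candidates remain on $5$ vertices and twenty-four on $6$ vertices, each of which is then checked (and found not to be Gorenstein) by software. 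So the paper never enumerates or tests normality for the full zoo of pure complexes; it only runs the Gorenstein test on a short explicit list drawn from a published table. Your argument buys self-containedness at the price of a much heavier computation; the paper's buys feasibility at the price of depending on an external classification. Logically both suffice, but if you pursue your version you should be prepared either to import the normality classification anyway or to justify that the full enumeration-plus-normality computation actually terminates in practice.
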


\begin{proof}
We can verify the conjecture by checking the Gorenstein condition for all complexes on $6$ or fewer vertices.
In fact, given all that we know about normal complexes and graphs we can greatly reduce the number of cases.
    
For pure complexes on $4$ or fewer vertices, they are either graphs, in which case the Gorenstein property
is verified by Theorem \ref{thm:ohsugicor}, or 2 or 3 dimensional complexes, in which they must be one of the complexes
 $\Delta_4$ or $C^i (\partial \Delta_{4 -i})$ for $i = 0, 1, 2$, which are all Gorenstein.

For complexes on $5$ vertices, we refer to the calculations and classifications of normal complexes that were
performed in \cite{Bernstein2017}.  Note that on $5$ vertices, every $3$ dimensional complex that is pure has the form 
$C^i (\partial \Delta_{5 -i})$ for $i = 0, 1, 2, 3, 4$, which are all Gorenstein.
The $0$ and $1$ dimensional Gorenstein complexes are classified by Theorem \ref{thm:ohsugicor}.  So, it remains to classify the
$2$ dimensional Gorenstein complexes.  According to Section 7.2 of \cite{Bernstein2017},
the only pure $2$ dimensional complexes that are not reducible, or a cone over a normal $1$ dimensional complexes, and the
complexes
\begin{enumerate}
    \item $[123][124][135][245]$
    \item  $[123][124][134][235][245]$
    \item  $[123][124][134][235][245][345]$.
\end{enumerate}
These three complexes were verified to be not Gorenstein using \cite{sagemath}.

A similar calculation works for the pure simplicial complexes on $6$ vertices.  There are $24$ normal pure simplicial complexes
that are not reducible, and that are not of the form $C^i (\partial \Delta_{6 -i})$, for $i = 0, \ldots, 5$.  
They are the following complexes in the Table in Section 7.3 of \cite{Bernstein2017}:
6, 10,17,20,28,35,39,40,41,43,44,45,53,54,58,59,60,61,68,69,70,73,76,78.   All were verified 
to be not Gorenstein using \cite{polymake, sagemath}.
\end{proof}

\bibliography{gorenstein.bib}{}
\bibliographystyle{plain}


\end{document}